\newcommand{\bol}[1]{\mbox{\boldmath$#1$}}
\newcommand{\bSigma}{\bol{\Sigma}}
\newcommand{\bx}{\mathbf{X}}
\newcommand{\by}{\mathbf{Y}}
\newcommand{\bC}{\mathbf{C}}
\newcommand{\bH}{\mathbf{H}}
\newcommand{\bI}{\mathbf{I}}
\newcommand{\bn}{\boldsymbol{\nu}}
\newcommand{\bS}{\mathbf{S}}
\newcommand{\sx}{\bar{\mathbf{x}}}
\newcommand{\Tr}{\text{tr}}
\newcommand{\bTheta}{\mathbf{\Theta}}
\numberwithin{equation}{section}
\theoremstyle{plain}
\newtheorem{theorem}{Theorem}[section]
\newtheorem{lemma}{Lemma}[section]
\newtheorem{corollary}{Corollary}[section]
\journal{Journal of Multivariate Analysis}
\begin{document}

\begin{frontmatter}
\title{On the Strong Convergence of the Optimal Linear Shrinkage Estimator for Large Dimensional Covariance Matrix$^*$}
\cortext[cor1]{Corresponding author: Arjun K. Gupta. E-mail address: gupta@bgsu.edu. Phone: (419) 372 - 2820. Fax: (419) 372 - 6092.}


  \author[a]{Taras  Bodnar}
  \author[b]{Arjun K. Gupta \corref{cor1}}
  \author[c]{Nestor Parolya}
\address[a]{Department of Mathematics, Humboldt-University of Berlin, D-10099 Berlin, Germany}
\address[b]{Department of Mathematics and Statistics, Bowling Green State University, Bowling Green, OH 43403, USA}
\address[c]{Institute of Empirical Finance (Econometrics), Leibniz University of Hannover, 30167, Germany}
%
%


\begin{abstract}
In this work we construct an optimal linear shrinkage estimator for the covariance matrix in high dimensions. The recent results from the random matrix theory allow us to find the asymptotic deterministic equivalents of the optimal shrinkage intensities and estimate them consistently. The developed distribution-free estimators obey \textit{almost surely} the smallest Frobenius loss over all linear shrinkage estimators for the covariance matrix. The case we consider includes the number of variables $p\rightarrow\infty$ and the sample size $n\rightarrow\infty$ so that $p/n\rightarrow c\in (0, +\infty)$. Additionally, we prove that the Frobenius norm of the sample covariance matrix tends almost surely to a deterministic quantity which can be consistently estimated.
\end{abstract}

\begin{keyword}
large-dimensional asymptotics\sep
random matrix theory\sep
covariance matrix estimation.
\MSC[2010] 60B20\sep 62H12\sep62G20\sep62G30
\end{keyword}
\end{frontmatter}

\section{Introduction}
Nowadays, the estimation of the covariance matrix is one of the most important problems not only in statistics but also in finance, wireless communications, biology etc. The traditional estimator of the covariance matrix, i.e. its sample counterpart, seems to be a good decision only when the dimension $p$ is much smaller than the sample size $n$. This case is called the "standard asymptotics" (see, e.g., Le Cam and Yang (2000)). Here, the sample covariance matrix is proven to be an unbiased and a consistent estimator for the covariance matrix. More problems arise when $p$ is comparable to $n$, i.e. both the dimension $p$ and the sample size $n$ tend to infinity while their ratio $p/n$ tends to a positive constant $c$. It is called the "large dimensional asymptotics" or "Kolmogorov asymptotics" (see, e.g., B\"{u}hlmann and van de Geer (2011), Cai and Shen (2011)). This type of asymptotics have been exhaustively studied by Girko (1990, 1995), where it was called the "general statistical analysis". There is a great amount of research done on the asymptotic behavior of functionals of the sample covariance matrix under the large dimensional asymptotics (see, e.g., Girko and Gupta (1994, 1996a, 1996b), Bai and Silverstein (2010)).

There are some significant improvements in the case when the covariance matrix has a special structure, e.g. sparse, low rank etc. (see, Cai et al. (2011), Rohde and Tsybakov (2011), Cai and Yuan (2012), Cai and Zhou (2012), etc.). The case when the underlying random process obeys the factor structure is studied by Fan et al. (2008).
In these cases the covariance matrix can be consistently estimated even in high-dimensional case. In the case when no additional information on the structure of the covariance matrix is available, the problem has not been studied in detail up to now. The exception is the paper of Ledoit and Wolf (2004), where a linear shrinkage estimator was suggested which possesses the smallest Frobenius loss in quadratic mean.

Ma$\breve{\text{c}}$enko and Pastur (1967), Yin (1986), Silverstein (1995), Bai et al. (2007), Bai and Silverstein (2010) used the large dimensional asymptotics to study the asymptotic behavior of the eigenvalues of general random matrices. They discovered that appropriately transformed random matrix at infinity has a nonrandom behavior and showed how to find the limiting density of its eigenvalues. In particular, Silverstein (1995) proved under very general conditions that the Stieltjes transform of the sample covariance matrix tends almost surely to a nonrandom function which satisfies some equation. This equation was first derived by Ma$\breve{\text{c}}$enko and Pastur (1967), who showed how the real covariance matrix and its sample estimate are connected at infinity. In our work we use this result for estimating functionals of the covariance matrix consistently.

In this work we concentrate on certain type of estimators, namely the shrinkage estimators. The shrinkage estimators were introduced by Stein (1956). They are constructed as a linear combination of the sample estimator and some known target. These estimators have remarkable property: they are biased but can significantly reduce the mean square error of the estimator. In the large as well as in the small dimensional cases it is difficult to find the consistent estimators for the so-called shrinkage intensities. In this situation Ledoit and Wolf (2004) made progress when the target matrix is the identity and found a feasible linear shrinkage estimator for the covariance matrix which is optimal in the sense of the squared mean. This estimator provided a remarkable dominance over the sample estimator and other known estimators for the covariance matrix. The linear shrinkage presented by Ledoit and Wolf (2004) shows its best performance in case when the eigenvalues of the covariance matrix are not dispersed and/or the concentration ratio $c$ is large.

In this paper we extend the work of Ledoit and Wolf (2004) by constructing a more general linear shrinkage estimator for a large dimensional covariance matrix.  The target matrix here is considered to be an arbitrary symmetric positive definite matrix with uniformly bounded trace norm. Using random matrix theory we prove that the optimal shrinkage intensities are nonrandom at infinity, find their asymptotic deterministic equivalents and estimate them consistently. Additionally we show that the Frobenius norm of the covariance matrix tends to a deterministic quantity which can also be estimated consistently. The resulting estimator  obeys \textit{almost surely} the smallest Frobenius loss when the dimension $p$ and the sample size $n$ increase together and $p/n\rightarrow c\in(0, \infty)$ as $n\rightarrow\infty$.

The rest of paper is organized as follows. In Section 2 we present the preliminary results from the random matrix theory which are used in the proofs of the theorems. Section 3 contains the \textit{oracle} linear shrinkage estimator and the main asymptotic results on the shrinkage intensities and the Frobenius norm of the sample covariance matrix. In Section 4 we present the \textit{bona fide} linear shrinkage estimator for the covariance matrix and make a short comparison with the well-known Ledoit and Wolf (2004) estimator. The results of the empirical study are provided in Section 5, while Section 6 summarizes all main results of the paper. The proofs of the theorems are moved to the appendix.

\section{Preliminary results and large dimensional asymptotics}

By "large dimensional asymptotics" or "Kolmogorov asymptotics" it is understood that $\dfrac{p}{n}\rightarrow c\in (0, +\infty)$ where the number of variables $p\equiv p(n)$ and the sample size $n$ both tend to infinity. In this case the traditional sample estimators perform poorly or very poorly and tend to over/underestimate the population covariance matrix.

We use the following notations in the paper:
\begin{itemize}
\item $\bSigma_n$ stands for the covariance matrix, and $\bS_n$ denotes the corresponding sample covariance matrix.
\item The pairs $(\tau_i,\bn_i)$ for $i=1,\ldots,p$ are the collection of eigenvalues and the corresponding orthonormal eigenvectors of the covariance matrix $\bSigma_n$.
\item $H_n(t)$ is the empirical distribution function (e.d.f.) of the eigenvalues of $\bSigma_n$, i.e.,
\begin{equation}\label{cov_edf}
H_n(t)=\dfrac{1}{p}\sum\limits_{i=1}^{p}\mathbbm{1}_{\{\tau_i<t\}}\,
\end{equation}
 where $\mathbbm{1}_{\{\cdot\}}$ is the indicator function.
\item Let $\bx_n$ be a $p\times n$ matrix which consists of independent and identically distributed (i.i.d.) real random variables with zero mean and unit variance such that
\begin{equation}\label{obs}
 \by_n=\bSigma_n^{\frac{1}{2}}\bx_n.
 \end{equation}
\end{itemize}
In the derivation of the main results the following five assumptions are used.
\begin{description}
\item[(A1)] The population covariance matrix $\bSigma_n$ is a nonrandom $p$-dimensional positive definite matrix.

\item[(A2)] Only the matrix $\by_n$ is observable. We know neither $\bx_{n}$ nor $\bSigma_n$ itself.

\item[(A3)] We assume that $H_n(t)$ converges to some limit $H(t)$ at all points of continuity of $H$.

\item[(A4)] The elements of the matrix $\bx_n$ have uniformly bounded moments of order $4+\varepsilon, \varepsilon>0$.

\item[(A5)] The largest eigenvalue of the covariance matrix $\bSigma_n$ is at most of the order $O(\sqrt{p})$. Moreover, we assume that the order of only finite number of eigenvalues could depend on $p$.
\end{description}

The assumptions (A1)-(A3) are important to prove Mar$\breve{\text{c}}$henko-Pastur equation (see, e.g., Silverstein (1995)) and they are standard in the large dimensional asymptotics (see, e.g., Bai and Silverstein (2010)). In particular, the assumption (A3) on the existence of the limiting population spectral function $H(t)$ is also very general because the support of $H(t)$ may be unbounded and non-compact. The assumption (A4) is needed for the proofs. This assumption is much weaker than the corresponding one used by Ledoit and Wolf (2004) for similar purposes which is the existence of the $8$th moment. The assumption (A5) ensures that the covariance matrix may have a couple of very large eigenvalues. Indeed, it is practically relevant and a good example of a model with unbounded eigenvalues of the covariance matrix is the factor model (see, Bai (2003), Bai and Ng (2002), Fan et al. (2013)). Moreover, even if the structure of the data is more complex and the factor model is not present we still fall within the proposed theoretical assumptions (A1)-(A5). To the end, we do not impose the assumption of any particular distribution on the underlying data generating process and the presented framework is also applicable if the sample covariance matrix is singular, i.e. if the sample size $n$ is larger than the dimension $p$. As a result, the assumptions (A1)-(A5) are general enough to cover many practical situations.

Let $(\lambda_i,\mathbf{u}_i)$ for $i=1,\ldots,p$ denote the set of eigenvalues and the corresponding orthonormal eigenvectors of the sample covariance matrix
 \footnote{It must be noted that the sample mean vector was omitted because the $1$-rank matrix $\sx\sx^{\prime}$ does not effect the asymptotic behavior of the spectrum of the sample covariance matrix (see, Bai and Silverstein (2010), Theorem A.44).}
\begin{equation}\label{samplecov}
 \bS_n=\dfrac{1}{n}\by_n\by_n^{\prime}=\dfrac{1}{n}\bSigma_n^{\frac{1}{2}}\bx_n\bx_n^{\prime}\bSigma_n^{\frac{1}{2}}\,.
 \end{equation}

Similar to (\ref{cov_edf}) the (e.d.f.) of the sample covariance matrix $\bS_n$ is defined by
\begin{equation}\label{sample_edf}
F_n(\lambda)=\dfrac{1}{p}\sum\limits_{i=1}^{p}\mathbbm{1}_{\{\lambda_i<\lambda\}},~~\lambda\in\mathbbm{R}\,.
\end{equation}
The most powerful tool for investigating the asymptotics of (e.d.f) $F_n(\lambda)$ is the Stieltjes transform. For a nondecreasing function $G$  with bounded variation the Stieltjes transform is defined by
\begin{equation}\label{Strans}
\forall z\in\mathbbm{C}^+~~~m_G(z)=\int\limits_{-\infty}^{+\infty}\dfrac{1}{\lambda-z}dG(\lambda)\,.
\end{equation}
In our notations  $\mathbbm{C}^+=\{z\in\mathbbm{C}: \textbf{Im}(z)>0\}$ is a half-plane of complex numbers with strictly positive imaginary part and any complex number is given by $z=\textbf{Re}(z)+i\textbf{Im}(z)$ with $\textbf{Re}(z)$ and $\textbf{Im}(z)$ the real and the imaginary parts accordingly. The Stieltjes transform and its importance for the behavior of spectrum of large dimensional random matrices is discussed in Silverstein (2009) in detail.

 For all $z\in\mathbbm{C}^+$ the Stieltjes transform of the sample (e.d.f.) $F_n(\lambda)$ is given by
\begin{eqnarray}\label{trans_s}
m_{F_n}(z)&=&\dfrac{1}{p}\sum\limits_{i=1}^{p}\int\limits_{-\infty}^{+\infty}\dfrac{1}{\lambda-z}\delta(\lambda-\lambda_i)d\lambda=\dfrac{1}{p}\Tr\{(\bS_n-z\bI)^{-1}\}\,,
\end{eqnarray}
where $\bI$ is a suitable identity matrix, tr($\cdot$) is the trace of the matrix, and $\delta(\cdot)$ is the Dirac delta function.

Mar$\breve{\text{c}}$enko and Pastur (1967) proved that the (e.d.f.) $F_n(\lambda)$ converges almost surely (a.s.) to a nonrandom limit $F(\lambda)$. Moreover, they derived an equation, the so-called Mar$\breve{\text{c}}$enko-Pastur (MP) equation, which shows the connection between $F(\lambda)$ and $H(\tau)$ at infinity. Under more general conditions the MP equation was considered by several authors (see, Yin (1986), Silverstein and Choi (1995) and Silverstein and Bai (1995)). The most general case was presented by Silverstein (1995) where the strong convergence under very general conditions was established. For illustration purposes, we summarize this result in Theorem 2.1.
\begin{theorem}{\textbf{[Silverstein (1995)]}} Assume that assumptions (A1)-(A3) are satisfied on the common probability space and $\frac{p}{n}\rightarrow c\in(0,+\infty)$ as $n\rightarrow\infty$. Then $F_n(t)\stackrel{a.s.}{\Rightarrow}F(t)$ as $n\rightarrow\infty$. Moreover, the Stieltjes transform of $F$ satisfies the following equation
\begin{equation}\label{MP}
m_F(z)=\int\limits_{-\infty}^{+\infty}\dfrac{1}{\tau(1-c-czm_F(z))-z}dH(\tau)\,,
\end{equation}
in the sense that $m_F(z)$ is the unique solution of (\ref{MP}) for all $z\in\mathbbm{C}^+$.
\end{theorem}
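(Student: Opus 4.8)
The plan is to work entirely with Stieltjes transforms and reduce the almost sure weak convergence $F_n\stackrel{a.s.}{\Rightarrow}F$ to the almost sure pointwise convergence of $m_{F_n}(z)$ on $\mathbbm{C}^+$. By the Stieltjes--Perron inversion formula together with the continuity theorem for Stieltjes transforms, it suffices to prove that for each fixed $z\in\mathbbm{C}^+$ the random quantity $m_{F_n}(z)=\frac1p\Tr\{(\bS_n-z\bI)^{-1}\}$ converges almost surely to a deterministic limit $m(z)$, and that this limit is the unique $\mathbbm{C}^+$-valued solution of (\ref{MP}); convergence on a countable dense subset of $\mathbbm{C}^+$ then extends to all of $\mathbbm{C}^+$ by a normal-families (Vitali) argument, since the $m_{F_n}$ are uniformly bounded on compact subsets of $\mathbbm{C}^+$. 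Writing $\bR(z)=(\bS_n-z\bI)^{-1}$ for the resolvent, I would split the task as $m_{F_n}(z)=\big(m_{F_n}(z)-\E\,m_{F_n}(z)\big)+\E\,m_{F_n}(z)$ and treat the fluctuation and the deterministic mean separately.

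For the fluctuation term I would establish $m_{F_n}(z)-\E\,m_{F_n}(z)\to 0$ almost surely by a martingale argument. Let $\by_1,\ldots,\by_n$ be the columns of $\by_n$ and let $\E_j$ denote conditional expectation given $\by_1,\ldots,\by_j$. Then $m_{F_n}(z)-\E\,m_{F_n}(z)=\sum_{j=1}^n(\E_j-\E_{j-1})\,\frac1p\Tr\bR(z)$ is a sum of martingale differences. Using the rank-one (Sherman--Morrison) identity relating $\bR$ to the leave-one-column-out resolvent, each difference is bounded in modulus by $C/(p\,\textbf{Im}(z))$, so Burkholder's inequality yields $\E|m_{F_n}(z)-\E\,m_{F_n}(z)|^{2q}\le C_q\,n^{-q}$ for $q>1$, and Borel--Cantelli delivers the almost sure convergence to zero.

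It remains to identify $\lim_n \E\,m_{F_n}(z)$ with the solution of (\ref{MP}). Writing $\bS_n=\frac1n\sum_j\by_j\by_j'$ and using $\bS_n\bR=\bI+z\bR$ gives the identity $1+z\,m_{F_n}(z)=\frac1{pn}\sum_j\by_j'\bR\by_j$, and Sherman--Morrison converts each summand into $\big(\tfrac1n\by_j'\bR_{(j)}\by_j\big)\big/\big(1+\tfrac1n\by_j'\bR_{(j)}\by_j\big)$, where $\bR_{(j)}$ is independent of $\by_j$. Since $\by_j=\bSigma_n^{1/2}\bx_j$ with $\bx_j$ having i.i.d.\ zero-mean unit-variance entries, the trace lemma for quadratic forms gives $\tfrac1n\by_j'\bR_{(j)}\by_j\approx\tfrac1n\Tr(\bSigma_n\bR_{(j)})\approx\tfrac1n\Tr(\bSigma_n\bR)$ up to errors that vanish in probability and, after the same martingale/truncation bookkeeping, almost surely. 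Substituting and taking expectations produces a self-consistent relation that, after passing to the limit using $p/n\to c$ and the convergence $H_n\Rightarrow H$ from (A3), collapses exactly to (\ref{MP}).

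The final ingredient is uniqueness: I would show that (\ref{MP}) admits at most one solution $m\in\mathbbm{C}^+$. Subtracting the equation evaluated at two candidate solutions $m_1,m_2$ and factoring out $m_1-m_2$ leaves a multiplier which, using $\textbf{Im}(m_i)>0$ and $\tau\ge 0$, can be shown to have modulus strictly less than one on $\mathbbm{C}^+$, forcing $m_1=m_2$. I expect the main obstacle to be the deterministic-equivalent step, specifically the uniform control of the quadratic-form fluctuations $\tfrac1n\by_j'\bR_{(j)}\by_j-\tfrac1n\Tr(\bSigma_n\bR_{(j)})$ and of the random denominators $1+\tfrac1n\by_j'\bR_{(j)}\by_j$: under only second moments this requires truncating the entries of $\bx_n$ (preserving mean and variance up to negligible corrections) and showing carefully that the accumulated errors are $o(1)$ almost surely, which is the technical heart of Silverstein's argument.
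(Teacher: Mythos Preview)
The paper does not actually prove this statement: Theorem~2.1 is quoted verbatim as a preliminary result from Silverstein (1995) and is used as a black box in the proofs of Theorems~3.1 and~3.2; no argument for it appears anywhere in the appendix. Consequently there is nothing in the paper to compare your proposal against.

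That said, your outline is a faithful sketch of the original Silverstein (1995) argument (and of the treatment in Bai and Silverstein (2010)): Stieltjes--Perron inversion to reduce to pointwise a.s.\ convergence of $m_{F_n}(z)$, a martingale-difference decomposition with rank-one resolvent perturbations for the fluctuation term, the Sherman--Morrison/leave-one-out identity combined with the quadratic-form trace lemma to derive the self-consistent equation, truncation of the entries of $\bx_n$ to control moments, and a contraction argument for uniqueness on $\mathbbm{C}^+$. The only caveat is that your sketch implicitly assumes enough moments for Burkholder's inequality and the trace lemma to go through; Silverstein's result holds under merely a finite second moment, so the truncation step you flag as ``the technical heart'' is indeed essential rather than cosmetic, and the almost-sure control of the denominators $1+\tfrac1n\by_j'\bR_{(j)}\by_j$ (bounded away from zero via $\textbf{Im}(z)>0$) also needs to be made explicit. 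With those details filled in, your plan matches the standard proof.
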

The MP equation (\ref{MP}) has a closed-form solution in several restricted cases. The famous Mar$\breve{\text{c}}$enko-Pastur law appears only when the covariance matrix $\bSigma_n$ is the multiple of the identity matrix, i.e., $\bSigma_n=\sigma\bI$.

\section{Optimal shrinkage estimator for covariance matrix}

In this section we construct an optimal shrinkage estimator for the covariance matrix under high-dimensional asymptotics. This estimator is only \textit{oracle} one, i.e., it depends on unknown quantities. The corresponding \textit{bona fide} estimator is given in Section 4.

 Note that the linear shrinkage estimator of Ledoit and Wolf (2004) has the smallest Frobenius loss only in quadratic mean. Consequently, it would be a great advantage to construct an estimator with almost sure smallest Frobenius loss. The general linear shrinkage estimator (GLSE) for the covariance matrix is given by
 \begin{equation}\label{gse1}
\widehat{\bSigma}_{GLSE}=\alpha_n\bS_n+\beta_n\bSigma_0~~\text{with}~||\bSigma_0||_{tr}\leq M\,.
\end{equation}
 where the symmetric positive definite matrix $\bSigma_0$ has bounded trace norm at infinity, i.e., there exists $M>0$ such that $\sup\limits_{n}||\bSigma_0||_{tr}=\sup\limits_{n}\text{tr}(\bSigma_0)\leq M$. No assumption is imposed on the shrinkage intensities $\alpha_n$ and $\beta_n$ which are objects of our interest.

  Since the shrinkage intensities are the main object of investigation and $\bSigma_0$ is fixed, a question arises how to choose this target matrix. It could depend on the underlying data as well as on the branch of science where the shrinkage estimator for the covariance matrix is applied, i.e., wireless communications, finance etc. On the other hand, from the view of Bayesian statistics $\bSigma_0$  can be interpreted as a hyperparameter of \textit{a priori} distribution. The useful comments about the choice of the target matrix $\bSigma_0$ can be found e.g. in Bai and Shi (2011), Ledoit and Wolf (2004).

The aim is to find the optimal shrinkage intensities $\alpha_n$ and $\beta_n$ which minimize the Frobenius norm for a given nonrandom target matrix $\bSigma_0$ expressed as
\begin{equation}\label{risk1}
L^2_F=||\widehat{\bSigma}_{GLSE}-\bSigma_n||_F^2=||\bSigma_n||^2_F+||\widehat{\bSigma}_{GLSE}||^2_F-2\text{tr}(\widehat{\bSigma}_{GLSE}\bSigma_n)\,.
\end{equation}
 As a result, using (\ref{gse1}) we want to solve the following optimization problem
\begin{eqnarray}\label{optm1}
&&L^2_F=\alpha_n^2||\bS_n||^2_F+2\alpha_n\beta_n\text{tr}(\bS_n\bSigma_0)+\beta_n^2||\bSigma_0||^2_F\\
&&-2\alpha_n\text{tr}(\bS_n\bSigma_n)-2\beta_n\text{tr}(\bSigma_n\bSigma_0)\longrightarrow\text{min}\nonumber\\
&&~~\text{with respect to}~\alpha_n~\text{and}~\beta_n\nonumber\,.
\end{eqnarray}
Taking the derivatives of $L^2_F$ with respect to $\alpha_n$ and $\beta_n$ and setting them equal to zero we get
\begin{equation}\label{dera}
\dfrac{\partial L^2_F}{\partial\alpha_n}=\alpha_n||\bS_n||^2_F+\beta_n\text{tr}(\bS_n\bSigma_0)-\text{tr}(\bS_n\bSigma_n)=0\,,
\end{equation}
\begin{equation}\label{derb}
\dfrac{\partial L^2_F}{\partial\beta_n}=\alpha_n\text{tr}(\bS_n\bSigma_0)+\beta_n||\bSigma_0||^2_F-\text{tr}(\bSigma_n\bSigma_0)=0\,.
\end{equation}
The Hessian of the $L^2_F$ has the form
\begin{equation}\label{hessian1}
 \bH=\left(
 \begin{array}{cc}
 ||\bS_n||^2_F&\text{tr}(\bS_n\bSigma_0)\\
 \text{tr}(\bS_n\bSigma_0)&||\bSigma_0||^2_F\,
 \end{array}
 \right)\,.
\end{equation}
From the following inequality it follows that the Hessian matrix $\bH$ is always positive definite:
\begin{align}\label{posdef} \text{det}(\bH)=||\bS_n||^2_F||\bSigma_0||^2_F-(\text{tr}(\bS_n\bSigma_0))^2&\geq||\bS_n||^2_F||\bSigma_0||^2_F-||\bS_n||^2_{\infty}(\text{tr}(\bSigma_0))^2\nonumber\\
 &\hspace{-3mm}\overset{Jensen}{\geq}(||\bS_n||^2_F-||\bS_n||^2_{\infty})||\bSigma_0||^2_F>0\,,
\end{align}
where $||\bS_n||_{\infty}$ denotes the spectral norm (square root of maximum eigenvalue of matrix $\bS_n^2$). The last inequality in (\ref{posdef}) is well-known (see, e.g., Golub and Van Loan (1996)).

From equations (\ref{dera}) and (\ref{derb}) it is easy to find the optimal shrinkage intensities $\alpha_n^*$ and $\beta_n^*$ as
 \begin{equation}\label{alfa} \alpha_n^*=\dfrac{\text{tr}(\bS_n\bSigma_n)||\bSigma_0||^2_F-\text{tr}(\bSigma_n\bSigma_0)\text{tr}(\bS_n\bSigma_0)}{||\bS_n||^2_F||\bSigma_0||^2_F-\bigl(\text{tr}(\bS_n\bSigma_0)\bigr)^2}\,,
 \end{equation}
 \begin{equation}\label{beta} \beta_n^*=\dfrac{\text{tr}(\bSigma_n\bSigma_0)||\bS_n||^2_F-\text{tr}(\bS_n\bSigma_n)\text{tr}(\bS_n\bSigma_0)}{||\bS_n||^2_F||\bSigma_0||^2_F-\bigl(\text{tr}(\bS_n\bSigma_0)\bigr)^2}\,.
 \end{equation}
  Next, we consider the asymptotic behavior of the quantities (\ref{alfa}) and (\ref{beta}), namely we look for their asymptotic equivalents. Recall that the sequence of random variables $\tilde{\xi}_n$ is called asymptotically equivalent to a nonrandom sequence $\xi_n$ when
  \begin{equation}
   \left|\tilde{\xi}_n-\xi_n\right|\longrightarrow0~~\text{a. s.}~\text{for}~n\rightarrow\infty\,.
  \end{equation}
  Note that it is sufficient to know the asymptotic equivalents of the quantities $||\bS_n||^2_F$, $\text{tr}(\bS_n\bSigma_n)$ and $\text{tr}(\bS_n\bSigma_0)$. It is not difficult to find the asymptotic equivalents to the last two quantities. This is done in Theorem 3.2. More difficult is to find a asymptotic equivalent to the first quantity, namely $||\bS_n||^2_F$. Since the Frobenius norm of the sample covariance matrix is very important in high dimensional statistics, it would be a great advantage to investigate its asymptotic behavior.

  In Theorem 3.1 we present our first result where we show that the normalized Frobenius norm of $\bS_n$ tends almost surely to a nonrandom quantity for $p/n\rightarrow c\in(0,+\infty)$ as $n\rightarrow\infty$.

\begin{theorem} Assume that (A1)-(A5) hold and $\dfrac{p}{n}\rightarrow c\in(0, +\infty)$ for $n\rightarrow\infty$. Then the Frobenius norm of the sample covariance matrix $\phi_n=\dfrac{1}{p}||\bS_n||^2_F$ almost surely tends to a nonrandom variable $\phi$ which is given by
\begin{equation}\label{phi}
 \phi=\int\limits_{-\infty}^{+\infty}\tau^2dH(\tau)+c\left(\int\limits_{-\infty}^{+\infty}\tau dH(\tau)\right)^2\,,
\end{equation}
where $H(t)$ denotes the limiting function of the spectral e.d.f. $H_n(t)$ defined in (\ref{cov_edf}).
\end{theorem}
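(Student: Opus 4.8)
The plan is to read $\phi_n$ as the second moment of the empirical spectral distribution $F_n$ of $\bS_n$ and to promote the weak convergence of Theorem 2.1 to convergence of this moment. Since $||\bS_n||_F^2=\Tr(\bS_n^2)=\sum_{i=1}^p\lambda_i^2$, one has
\begin{equation*}
\phi_n=\frac1p\Tr(\bS_n^2)=\int\limits_{-\infty}^{+\infty}\lambda^2\,dF_n(\lambda)\,.
\end{equation*}
Theorem 2.1 yields $F_n\stackrel{a.s.}{\Rightarrow}F$, but the test function $\lambda\mapsto\lambda^2$ is unbounded, so this alone does not give $\int\lambda^2\,dF_n\to\int\lambda^2\,dF$. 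The proof therefore splits into two tasks: identifying the value of $\int\lambda^2\,dF$, and justifying the convergence of the moment.

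To identify the limit I would expand the equation (\ref{MP}) at $z=\infty$. Writing $m_F(z)=-\sum_{k\ge0}M_kz^{-(k+1)}$ with $M_k=\int\lambda^k\,dF(\lambda)$ and $M_0=1$, I substitute this series into (\ref{MP}), expand the integrand $[\tau(1-c-czm_F(z))-z]^{-1}$ in powers of $1/z$, and match coefficients. The $z^{-2}$ term gives $M_1=\int\tau\,dH(\tau)$, and the $z^{-3}$ term gives
\begin{equation*}
M_2=\int\limits_{-\infty}^{+\infty}\tau^2\,dH(\tau)+c\left(\int\limits_{-\infty}^{+\infty}\tau\,dH(\tau)\right)^2\,,
\end{equation*}
which is precisely the claimed $\phi$ and explains both terms of (\ref{phi}).

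For the almost sure convergence $\int\lambda^2\,dF_n\to M_2$ I would proceed in two steps. First, a direct mean computation: expanding $\Tr(\bS_n^2)=n^{-2}\Tr(\bx_n\bx_n^{\prime}\bSigma_n\bx_n\bx_n^{\prime}\bSigma_n)$ and taking expectations, the entries of $\bx_n$ being i.i.d.\ with mean $0$, variance $1$ and bounded fourth moment by (A4), the only index configurations surviving at leading order give
\begin{equation*}
\frac1p\E[\Tr(\bS_n^2)]=\frac1p\Tr(\bSigma_n^2)+\frac{p}{n}\left(\frac1p\Tr(\bSigma_n)\right)^2+o(1)\,,
\end{equation*}
the fourth-moment and remaining diagonal contributions being $O\big((pn)^{-1}\Tr(\bSigma_n^2)\big)=o(1)$. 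Second, a concentration step: after truncating and recentring the entries of $\bx_n$ (permissible because (A4) supplies $4+\varepsilon$ moments, the truncation error in $\phi_n$ being negligible a.s.), I would decompose $\phi_n-\E[\phi_n]$ into martingale differences along the columns of $\bx_n$ and, via Burkholder's inequality, obtain a summable high-moment bound of order $n^{-1-\delta}$; Borel--Cantelli then gives $\phi_n-\E[\phi_n]\to0$ a.s.

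The hard part is neither the combinatorics of the mean nor the routine variance estimate, but upgrading the weak convergence $F_n\Rightarrow F$ to convergence of the second moment, equivalently establishing the uniform integrability of $\lambda^2$ under $F_n$ together with $p^{-1}\Tr(\bSigma_n)\to\int\tau\,dH(\tau)$ and, more delicately, $p^{-1}\Tr(\bSigma_n^2)\to\int\tau^2\,dH(\tau)$. This is precisely where (A5) is indispensable: squaring amplifies the influence of the few large eigenvalues of $\bSigma_n$ and of the induced extreme eigenvalues of $\bS_n$, so whereas the first normalised trace is insensitive to eigenvalues of order $\sqrt p$, the second is not. One must therefore use the restriction in (A5) on the number and size of the growing eigenvalues to show that their aggregate effect on $p^{-1}\Tr(\bSigma_n^2)$ and on $\lambda_{\max}(\bS_n)$ does not perturb the limit nor destroy the uniform integrability. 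Controlling these large eigenvalues, rather than the mean or variance computations, is the main obstacle of the proof.
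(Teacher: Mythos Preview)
Your identification of $\phi$ by matching coefficients in the Laurent expansion of $m_F(z)$ at $z=\infty$ is the paper's computation, reparametrised: the paper substitutes $z\mapsto1/z$, sets $\Theta(z)=m_F(1/z)/z=-\sum_{k\ge0}M_kz^k$, and recovers $M_0,M_1,M_2$ as $-\Theta(0),-\Theta'(0),-\tfrac12\Theta''(0)$ by implicitly differentiating the transformed MP equation and sending $z\to0^+$. The arithmetic is the same; only the bookkeeping differs.

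Where you genuinely depart is in the almost sure step. The paper performs no combinatorics on $\E[\Tr(\bS_n^2)]$ and no martingale--Burkholder concentration. It writes $\phi_n=-\tfrac12\Theta_n''(0)$ with $\Theta_n(z)=m_{F_n}(1/z)/z$, invokes Theorem~2.1 to obtain $\Theta_n(z)\to\Theta(z)$ a.s.\ on $\mathbbm{C}^+$, and then passes to $\Theta_n''(0)\to\Theta''(0)$, relying (implicitly) on the analyticity of Stieltjes transforms and on dominated-convergence bounds for the MP integrand as $z\to0^+$. Your route is longer but self-contained: it does not lean on Silverstein's theorem for the a.s.\ convergence, and the martingale decomposition would give a rate if desired. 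The paper's route is short precisely because it outsources the hard work to Theorem~2.1, at the price of being less explicit about why the second derivative at the boundary point $z=0$ inherits the convergence. The uniform-integrability concern you flag under (A5) is real for either strategy, since both must ultimately match the limit with $\int\tau^2\,dH+c\bigl(\int\tau\,dH\bigr)^2$; the paper postpones this issue to the proof of Theorem~3.2 rather than confronting it inside Theorem~3.1.
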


The proof of Theorem 3.1 follows from the MP equation (\ref{MP}) and is presented in the Appendix.  In particular, it ensures that the Frobenius norm of the sample covariance matrix is fixed and depends solely on the function $H$ and $c$ on infinity. Nevertheless, we are not able to estimate $||\bSigma_n||^2_F$ using this knowledge since the function $H(t)$ is unknown. That is why we need to find the asymptotically equivalent quantity to $||\bS_n||^2_F$.

Theorem 3.1 gives us some intuition about this quantity. The idea is to replace the integrals in (\ref{phi}) by the corresponding finite sums, namely $1/p\sum\limits_{i=1}^p\tau_i^2=1/p\text{tr}(\bSigma_n^2)$ and $1/p\sum\limits_{i=1}^p\tau_i=1/p\text{tr}(\bSigma_n)$. The main advantage of the procedure is that this substitution does not effect the almost sure convergence.

\begin{theorem} Under assumptions (A1)-(A5) for $\dfrac{p}{n}\rightarrow c\in(0, +\infty)$ it holds that
\begin{equation}\label{phi2}
\dfrac{1}{p}\Biggl|||\bS_n||^2_F-\biggl(||\bSigma_n||^2_F+\dfrac{c}{p}||\bSigma_n||^2_{tr}\biggr)\Biggr|\longrightarrow0~~\text{a.s.}~\text{for}~n\rightarrow\infty\,
\end{equation}
where $||\bSigma||^2_{tr}=\bigl(\text{tr}(\bSigma_n)\bigr)^2$ is the squared trace norm of matrix $\bSigma_n$.

Additionally, for the quantity $\text{tr}(\bS_n\mathbf{\Theta})$, where $\mathbf{\Theta}$ is a symmetric positive definite matrix with bounded trace norm, it holds  that
\begin{equation}\label{trst}
\dfrac{1}{p}\Biggl|\text{tr}(\bS_n\mathbf{\Theta})-\text{tr}(\bSigma_n\mathbf{\Theta})\Biggr|\longrightarrow0~~\text{a. s.}~~\text{for}~~\dfrac{p}{n}\rightarrow c\in(0, +\infty)~\text{as}~n\rightarrow\infty\,.
\end{equation}
\end{theorem}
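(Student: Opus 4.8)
\textit{Proof proposal.} The plan is to reduce both assertions to sums of quadratic forms in the columns of $\bx_n$, to compute their expectations exactly, and then to promote convergence of the mean to almost sure convergence by a variance estimate together with the Borel--Cantelli lemma; the moment bound (A4) and the eigenvalue bound (A5) enter precisely at the two places where a naive argument would break down. Writing $\bx_n=(\mathbf{x}_1,\dots,\mathbf{x}_n)$ for the i.i.d.\ columns $\mathbf{x}_k\in\mathbbm{R}^p$ and using (\ref{samplecov}), a direct expansion gives
\begin{equation*}
||\bS_n||^2_F=\text{tr}(\bS_n^2)=\frac{1}{n^2}\sum_{k,l=1}^n\bigl(\mathbf{x}_k^{\prime}\bSigma_n\mathbf{x}_l\bigr)^2,\qquad \text{tr}(\bS_n\bTheta)=\frac{1}{n}\sum_{k=1}^n\mathbf{x}_k^{\prime}\bB\,\mathbf{x}_k,\quad\bB=\bSigma_n^{1/2}\bTheta\bSigma_n^{1/2}.
\end{equation*}
Throughout I write $\mu_4=\E(x_{11}^4)$, which is finite by (A4), and use the elementary identities $\E(\mathbf{x}_k^{\prime}\mathbf{A}\mathbf{x}_k)=\text{tr}(\mathbf{A})$, $\E(\mathbf{x}_k^{\prime}\mathbf{A}\mathbf{x}_k)^2=(\mu_4-3)\sum_i\mathbf{A}_{ii}^2+(\text{tr}\mathbf{A})^2+2\,\text{tr}(\mathbf{A}^2)$ and $\E(\mathbf{x}_k^{\prime}\mathbf{A}\mathbf{x}_l)^2=\text{tr}(\mathbf{A}^2)$ for $k\ne l$, valid for any symmetric $\mathbf{A}$.

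I would dispose of (\ref{trst}) first, as it is the cleaner of the two. The representation above exhibits $\text{tr}(\bS_n\bTheta)$ as a normalised sum of i.i.d.\ quadratic forms with common mean $\text{tr}(\bB)=\text{tr}(\bSigma_n\bTheta)$, so that $\E\,\text{tr}(\bS_n\bTheta)=\text{tr}(\bSigma_n\bTheta)$ \emph{exactly}. Its variance equals $\frac1n\bigl[(\mu_4-3)\sum_i\bB_{ii}^2+2\,\text{tr}(\bB^2)\bigr]$, and here (A5) is decisive: the bound $\lambda_{\max}(\bSigma_n)=O(\sqrt p)$ together with the bounded trace norm of the positive definite $\bTheta$ (so that $\lambda_{\max}(\bTheta)\le\text{tr}(\bTheta)\le M$) gives $\lambda_{\max}(\bB)=O(\sqrt p)$ and $\text{tr}(\bB)=O(\sqrt p)$, whence $\text{tr}(\bB^2)\le\lambda_{\max}(\bB)\,\text{tr}(\bB)=O(p)$ and likewise $\sum_i\bB_{ii}^2=O(p)$. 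Consequently $\Var\bigl(\tfrac1p\text{tr}(\bS_n\bTheta)\bigr)=O\bigl(\tfrac1{pn}\bigr)=O(n^{-2})$, a summable sequence; Chebyshev's inequality and the Borel--Cantelli lemma then yield (\ref{trst}).

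For (\ref{phi2}) the expectation is obtained by splitting the double sum into its diagonal ($k=l$) and off-diagonal ($k\ne l$) parts, giving
\begin{equation*}
\E\Bigl[\tfrac1p||\bS_n||^2_F\Bigr]=\frac1p\,\text{tr}(\bSigma_n^2)+\frac{1}{np}(\text{tr}\bSigma_n)^2+\frac{1}{np}\text{tr}(\bSigma_n^2)+\frac{\mu_4-3}{np}\sum_i(\bSigma_n)_{ii}^2.
\end{equation*}
The first two summands are exactly $\tfrac1p||\bSigma_n||^2_F$ and, upon replacing $1/n$ by $c/p$, the target $\tfrac{c}{p^2}||\bSigma_n||^2_{tr}$; the three remaining discrepancies all vanish. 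Indeed, (A5) yields $\text{tr}(\bSigma_n^2)\le\lambda_{\max}(\bSigma_n)\,\text{tr}(\bSigma_n)=O(p^{3/2})$ and $\sum_i(\bSigma_n)_{ii}^2=O(p^{3/2})$, so the last two terms are $O(\sqrt p/n)\to0$, while $\bigl(\tfrac1{np}-\tfrac{c}{p^2}\bigr)(\text{tr}\bSigma_n)^2=\bigl(\tfrac pn-c\bigr)\tfrac{(\text{tr}\bSigma_n)^2}{p^2}\to0$ because $(\text{tr}\bSigma_n)^2=O(p^2)$ and $p/n\to c$. Thus $\tfrac1p\E||\bS_n||^2_F$ converges to the deterministic quantity subtracted in (\ref{phi2}).

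The principal obstacle is to upgrade this to almost sure convergence of $\tfrac1p||\bS_n||^2_F$ to its mean. Since $||\bS_n||^2_F$ is quartic in the entries of $\bx_n$, a direct variance bound would require moments of order eight, which (A4) does not supply. I would resolve this by the truncation technique standard in random matrix theory (Bai and Silverstein 2010): replace each $x_{ij}$ by a truncated, recentred and rescaled version at level $\eta_n\sqrt n$ with $\eta_n\downarrow0$, and use the uniform $(4+\varepsilon)$-moment bound of (A4) to show that this modification alters $\tfrac1p||\bS_n||^2_F$ by a quantity converging to zero almost surely. For the truncated array all moments exist, so the fluctuation of $\tfrac1p||\bS_n||^2_F$ about its mean can be controlled---most transparently through the column martingale decomposition $||\bS_n||^2_F-\E||\bS_n||^2_F=\sum_k(\E_k-\E_{k-1})||\bS_n||^2_F$ and Burkholder's inequality---by a summable sequence, the matrix traces being bounded by (A5) exactly as above. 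A final application of Borel--Cantelli gives almost sure convergence to the mean, and combining this with the convergence of the mean established above proves (\ref{phi2}). The off-diagonal contribution $\sum_{k\ne l}(\mathbf{x}_k^{\prime}\bSigma_n\mathbf{x}_l)^2$, whose summands are strongly correlated, is where the bulk of the technical effort lies.
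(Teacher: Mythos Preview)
Your treatment of (\ref{trst}) is essentially the paper's: both rewrite $\tfrac1p\text{tr}(\bS_n\bTheta)$ as an average of i.i.d.\ quadratic forms $\mathbf{x}_i^{\prime}\bigl(\tfrac1p\bSigma_n^{1/2}\bTheta\bSigma_n^{1/2}\bigr)\mathbf{x}_i$ and use (A5) to bound the trace norm of the inner matrix. The paper then invokes Lemma~4 of Rubio and Mestre (2011), whereas you reprove that lemma in this instance via a second-moment bound and Borel--Cantelli; the two are interchangeable.

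For (\ref{phi2}) the routes genuinely differ. The paper never computes moments of $||\bS_n||^2_F$ and never faces the eighth-moment obstacle you identify. Instead it appeals to Theorem~3.1, whose proof differentiates the Mar\v{c}enko--Pastur equation twice to obtain $\tfrac1p||\bS_n||^2_F\to\phi$ almost surely, with $\phi=\int\tau^2\,dH+c\bigl(\int\tau\,dH\bigr)^2$; the almost-sure convergence is inherited directly from Silverstein's (1995) result $F_n\Rightarrow F$. It then observes that, by (A3), the deterministic quantity $\tfrac1p||\bSigma_n||^2_F+\tfrac{c}{p^2}||\bSigma_n||^2_{tr}$ converges to the same $\phi$, and a triangle inequality finishes. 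Your route---an exact expectation calculation followed by truncation and a martingale/Burkholder concentration bound for the truncated array---is a correct plan and has the merit of being fully self-contained, bypassing Stieltjes transforms and the MP equation entirely. The price is the technical work you acknowledge at the off-diagonal term $\sum_{k\neq l}(\mathbf{x}_k^{\prime}\bSigma_n\mathbf{x}_l)^2$; the paper's argument is a few lines because the analytic burden has been shifted to Theorem~3.1.
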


The proof of the theorem is given in the Appendix. In contrast to Theorem 3.1, Theorem 3.2 contains a better interpretation of the asymptotic result (\ref{phi2}).
It shows, in particular, that the consistent estimator of the Frobenius norm of the real covariance matrix, $||\bSigma_n||^2_F$, is not equal to its sample counterpart. On the other hand the functionals of the type $\text{tr}(\bS_n\mathbf{\Theta})$ are consistently estimated by the sample counterparts.

Moreover, it appears that under the large dimensional asymptotics, the Frobenius norm of $\bS_n$ is shifted by the constant $c/p||\bSigma||^2_{tr}$. As a result, we know the asymptotic value of the bias and the consistent estimator for Frobenius norm of the covariance matrix can be constructed by subtracting it from the sample counterpart. The exact form of this estimator is presented in Section 4.

Next, we consider the shrinkage intensities $\alpha_n^*$ and $\beta_n^*$. The application of Theorem 3.2 allows us to find their asymptotic properties.
This is done in Corollary 3.1.
\begin{corollary} Assume that (A1) - (A5) are fulfilled. Then for $\dfrac{p}{n}\rightarrow c\in(0, +\infty)$ as $n\rightarrow\infty$ the optimal shrinkage intensities $\alpha_n^*$ and $\beta_n^*$ satisfy
\begin{equation}\label{alfa2}
\Biggl|\alpha_n^*-\alpha^*\Biggr|\longrightarrow0~~\text{a.s.}~\text{for}~n\rightarrow\infty\,,
\end{equation}
 where
 \begin{equation} \alpha^*=1-\dfrac{\dfrac{c}{p}||\bSigma||^2_{tr}||\bSigma_0||^2_F}{\bigl(||\bSigma_n||^2_F+\dfrac{c}{p}||\bSigma_n||^2_{tr}\bigr)||\bSigma_0||^2_F-\bigl(\text{tr}(\bSigma_n\bSigma_0)\bigr)^2}
 \end{equation}
and
\begin{equation}\label{beta2}
\Biggl|\beta_n^*-\beta^*\Biggr|\longrightarrow0~~\text{a.s.}~\text{for}~n\rightarrow\infty\,,
\end{equation}
 where
 \begin{equation}
  \beta^*=\dfrac{\text{tr}(\bSigma_n\bSigma_0)}{||\bSigma_0||^2_F}\left(1-\alpha^*\right)\,.
 \end{equation}
\end{corollary}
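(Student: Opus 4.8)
The plan is to regard $\alpha_n^*$ and $\beta_n^*$, as given by the explicit formulas (\ref{alfa}) and (\ref{beta}), as rational functions of the three random functionals $||\bS_n||^2_F$, $\text{tr}(\bS_n\bSigma_n)$ and $\text{tr}(\bS_n\bSigma_0)$, and to replace each functional by the deterministic equivalent supplied by Theorem 3.2. Using (\ref{phi2}) I would replace $||\bS_n||^2_F$ by $||\bSigma_n||^2_F+\frac{c}{p}||\bSigma_n||^2_{tr}$; using (\ref{trst}) with $\bTheta=\bSigma_0$ I would replace $\text{tr}(\bS_n\bSigma_0)$ by $\text{tr}(\bSigma_n\bSigma_0)$; and I would replace $\text{tr}(\bS_n\bSigma_n)$ by $||\bSigma_n||^2_F$. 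Substituting into (\ref{alfa}), the numerator becomes $||\bSigma_n||^2_F||\bSigma_0||^2_F-(\text{tr}(\bSigma_n\bSigma_0))^2$ and the denominator $\big(||\bSigma_n||^2_F+\frac{c}{p}||\bSigma_n||^2_{tr}\big)||\bSigma_0||^2_F-(\text{tr}(\bSigma_n\bSigma_0))^2$, and forming $1$ minus their ratio reproduces $\alpha^*$ after a one-line cancellation in which only $\frac{c}{p}||\bSigma_n||^2_{tr}||\bSigma_0||^2_F$ survives in the numerator. The same substitution in (\ref{beta}) makes the two $\text{tr}(\bSigma_n\bSigma_0)||\bSigma_n||^2_F$ terms cancel, leaving $\text{tr}(\bSigma_n\bSigma_0)\frac{c}{p}||\bSigma_n||^2_{tr}$ over the identical denominator, which is readily seen to equal $\frac{\text{tr}(\bSigma_n\bSigma_0)}{||\bSigma_0||^2_F}(1-\alpha^*)$; equivalently this relation drops straight out of the exact identity (\ref{derb}).

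Turning the heuristic substitution into genuine almost sure convergence is the substance of the proof. One preliminary point is that the replacement of $\text{tr}(\bS_n\bSigma_n)$ uses $\bTheta=\bSigma_n$, which is not trace-class, so strictly it needs the version of (\ref{trst}) proved for $\bSigma_n$ inside Theorem 3.2 rather than the stated trace-bounded form. Next I would pin the denominator away from zero: it equals $\det(\bH)$ from (\ref{hessian1}), and the bound (\ref{posdef}) together with Theorem 3.1 shows that $\frac1p\det(\bH)$ stays above a positive constant, while on the deterministic side Cauchy--Schwarz gives $(\text{tr}(\bSigma_n\bSigma_0))^2\le||\bSigma_n||^2_F||\bSigma_0||^2_F$, so the limiting denominator is at least $\frac{c}{p}||\bSigma_n||^2_{tr}||\bSigma_0||^2_F$, which after division by $p$ is bounded below since $\frac1p\text{tr}(\bSigma_n)\to\int\tau\,dH(\tau)>0$. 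With both the random and the limiting denominators (normalised by $p$) bounded away from $0$, convergence of each ratio reduces to showing that the error in the numerator times the denominator, plus the error in the denominator times the numerator, is $o(p^2)$.

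The hard part will be the resulting cross terms. Because (A5) permits $\bSigma_n$ to carry finitely many eigenvalues of order $O(\sqrt p)$, the functionals $\text{tr}(\bSigma_n\bSigma_0)$ and $\text{tr}(\bS_n\bSigma_0)$ are only $O(\sqrt p)$, not $O(1)$, and indeed $\beta^*$ may itself grow like $\sqrt p$; a crude estimate such as $\text{tr}(\bSigma_n\bSigma_0)\cdot\frac1p|\text{tr}(\bS_n\bSigma_0)-\text{tr}(\bSigma_n\bSigma_0)|=O(\sqrt p)\cdot o(1)$ is therefore not obviously negligible. The estimates of Theorem 3.2 must hence be used in a form sharp enough that these products still tend to $0$, and I would secure this by splitting $\bSigma_n$ into its bounded bulk, for which $\text{tr}(\,\cdot\,\bSigma_0)$ is $O(1)$ and (\ref{trst}) applies cleanly, and the finite-rank part generated by the large eigenvalues, which I would treat directly through the concentration of the quadratic forms $\bn_i^{\prime}\bS_n\bn_i$. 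Once every cross term is shown to be $o(p^2)$ relative to the $O(p^2)$ product of the two denominators, the ratio argument delivers $|\alpha_n^*-\alpha^*|\to0$ and $|\beta_n^*-\beta^*|\to0$ almost surely, completing the proof.
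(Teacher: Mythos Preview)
Your approach is the same as the paper's: plug the deterministic equivalents from Theorem~3.2 into the explicit formulas (\ref{alfa})--(\ref{beta}) and simplify. The paper's entire proof is three lines; it observes that the result ``follows straightforwardly from Theorem~3.2'' and that the only thing left to check is that $\frac{1}{p}\bSigma_n$ has bounded trace norm, which it derives via
\[
\Bigl\|\tfrac{1}{p}\bSigma_n\Bigr\|_{tr}=\tfrac{1}{p}\text{tr}(\bSigma_n)\leq\tfrac{1}{\sqrt{p}}\|\bSigma_n\|_F\leq\sqrt{\|\bSigma_n\|_\infty}<\infty,
\]
so that (\ref{trst}) can be invoked with $\bTheta=\frac{1}{p}\bSigma_n$ to handle $\text{tr}(\bS_n\bSigma_n)$. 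That is the paper's answer to your remark that ``$\bTheta=\bSigma_n$ is not trace-class'': rather than appealing to something proved \emph{inside} Theorem~3.2, it rescales and applies the stated trace-bounded form directly.

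Everything else you do---pinning the denominator away from zero via (\ref{posdef}) and Cauchy--Schwarz, tracking the exact normalisations, and worrying about cross terms of size $O(\sqrt{p})\cdot o(1)$ created by the $O(\sqrt{p})$ eigenvalues permitted under (A5), together with your proposed bulk/finite-rank splitting---simply does not appear in the paper. The paper treats the passage from Theorem~3.2 to the corollary as immediate and does not engage with these issues at all. Your extra care is not wrong, and arguably some of it is needed for a fully rigorous argument under (A5) as stated, but it goes well beyond what the authors wrote.
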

\begin{proof}
This result follows straightforwardly from Theorem 3.2. Only the boundedness of the trace norm of the matrix $1/p\bSigma_n$ is needed. It obviously follows from the boundedness of its spectral norm $||\bSigma_n||_{\infty}$, namely
\begin{equation}\label{cor1}
||1/p\bSigma_n||_{tr}=1/p\text{tr}(\bSigma_n)\stackrel{Jensen}{\leq}\dfrac{1}{\sqrt{p}}||\bSigma_n||_{F}\leq\sqrt{||\bSigma_n||_{\infty}}<\infty\,.
\end{equation}
\end{proof}
First, we observe that the application of Corollary 3.1 and Theorem 3.2 ensures that the shrinkage intensities $\alpha^*$ and $\beta^*$ can be consistently estimated. We summarize this result in Section 4.

The second interpretation of the results of Corollary 3.1 shows that $\alpha^*<1$ as soon as $c>0$ which follows directly  from (\ref{posdef}). Only if $c=0$, we get $\alpha^*=1$ and $\beta^*=0$. In this case, the sample covariance matrix is a good estimator for the population covariance matrix which minimizes the Frobenius norm. In contrast if $p$ increases, i.e. if $c>0$, the general linear shrinkage estimator (\ref{gse1}) improves the performance of the sample estimator. Moreover, the impact of this improvement becomes larger as $p$ approaches $n$.

\section{Bona fide estimator for the covariance matrix}
This section is dedicated to the \textit{bona fide} estimation of the unknown parameters from Section 3. As we have already mentioned the asymptotic shrinkage intensities $\alpha^*$ and $\beta^*$ from Corollary 3.1 can be consistently estimated using the result of Theorem 3.2.

Obviously, both $\alpha^*$ and $\beta^*$ depend on the Frobenius norm of the covariance matrix $\bSigma_n$ and on the functionals of the type $\text{tr}(\bSigma_n\bTheta)$. Due to Theorem 3.2 the consistent estimator of latter term is its sample counterpart, while the consistent estimator of  the Frobenius norm $\psi_n=\dfrac{1}{p}||\bSigma_n||^2_F$ is given by
\begin{equation}\label{Fnormest}
\widehat{\psi}_n=\dfrac{1}{p}||\bS_n||^2_F-\dfrac{1}{np}||\bS_n||^2_{tr}\,.
\end{equation}
 The resulting estimator (\ref{Fnormest}) is similar to that obtained by Girko (1995). In contrast to Girko (1995), Theorem 3.2 was proved under more general conditions and we use the complex Stieltjes transform while Girko (1995) used the real one. Girko's result holds for $c\in(0,1)$ while our for $c\in(0, +\infty)$. Nevertheless, using Theorem 3.1 and Theorem 3.2 it can be shown that the so-called $G^2_4$-estimator considered by Girko (1995) coincides with (\ref{Fnormest}) and it is indeed the consistent estimator for the Frobenius norm of the covariance matrix under the large dimensional asymptotics.

The optimal linear shrinkage estimator (OLSE) for the covariance matrix $\bSigma_n$ is given by
 \begin{equation}\label{optshrin}
\widehat{\bSigma}_{OLSE}=\hat{\alpha}^*\bS_n+\hat{\beta}^*\bSigma_0~~\text{with}~||\bSigma_0||_{tr}\leq M\,,
\end{equation}
where
\begin{equation}\label{hata}
 \hat{\alpha}^*=1-\dfrac{\dfrac{1}{n}||\bS_n||^2_{tr}||\bSigma_0||^2_F}{||\bS_n||^2_F||\bSigma_0||^2_F-\bigl(\text{tr}(\bS_n\bSigma_0)\bigr)^2}
 \end{equation}
and
 \begin{equation}\label{bata}
  \hat{\beta}^*=\dfrac{\text{tr}(\bS_n\bSigma_0)}{||\bSigma_0||^2_F}\left(1-\hat{\alpha}^*\right)\,.
 \end{equation}

 The OLSE estimator possesses almost surely the smallest Frobenius loss according to Theorem 3.2 and has a simple structure. Moreover, when $p>n$ and the sample covariance matrix $\bS_n$ is singular the optimal linear shrinkage $\widehat{\bSigma}_{OLSE}$ stays invertible and applicable in practice.

Next, we consider an interesting special case when $\bSigma_0=\frac{1}{p}\bI$ in more details. In this case $\widehat{\bSigma}_{OLSE}$ looks very similar to the linear shrinkage estimator proposed by Lediot and Wolf (2004). However, they are not equal.

First, the linear shrinkage estimator of Lediot and Wolf (2004) has the smallest Frobenius loss in quadratic mean while the suggested estimator in (\ref{optshrin}) ensures almost sure convergence to the oracle. Moreover, Lediot and Wolf (2004) assumed the existence of $8$th moment while our estimator is derived under the assumption that $4+\varepsilon, \varepsilon>0$, moment exists.

Second, the estimator of Ledoit and Wolf (2004) and the suggested optimal linear shrinkage estimator (\ref{optshrin}) with $\bSigma_0=1/p\bI$ differ in $\hat{\alpha}^*$. Instead of $\dfrac{1}{n}||\bS_n||^2_{tr}$ Ledoit and Wolf (2004) used $\dfrac{1}{n^2}\sum\limits_{i=1}^n||\mathbf{y}_i\mathbf{y}^\prime_i-\bS_n||^2_F$, where $\mathbf{y}_i$ are the columns of the matrix $\bSigma_n^{1/2}\mathbf{X}_n$. Indeed, let $d^2=\dfrac{1}{p}||\bS_n||^2_F-\bigl(1/p\text{tr}(\bS_n)\bigr)^2$ and $b^2=\dfrac{1}{p}\dfrac{1}{n^2}\sum\limits_{i=1}^n||\mathbf{y}_i\mathbf{y}^\prime_i-\bS_n||^2_F$. Then the estimate for $\alpha^*$ by Ledoit and Wolf (2004) is given by
\begin{equation}\label{LW2004}
\hat{\alpha}^*_{LW}=1-\dfrac{min\{b^2,d^2\}}{d^2}\,.
\end{equation}
Consequently, from (\ref{LW2004}) we observe that the Ledoit-Wolf (LW) linear shrinkage estimator is constrained whereas our optimal shrinkage estimator (\ref{optshrin}) for $\bSigma_0=\frac{1}{p}\bI$ is unconstrained. Moreover, if $b^2>d^2$ in (\ref{LW2004}) then $\alpha^*_{LW}=0$ independently how large $p$ is with respect to $n$. In this case LW estimator is equal to the target matrix $\text{tr}(\bS_n)\dfrac{1}{p}\bI$.  In contrast, for the suggested OLSE estimator (\ref{optshrin}) it holds always that $0<\alpha^*\leq1$. Moreover, $\alpha^*=1$ iff $c=0$ which means that the sample covariance matrix possesses the smallest Frobenius loss only if $p$ is much smaller than $n$. For $c>0$, the sample covariance matrix is not an optimal estimator for the covariance matrix.

 Third, the LW linear shrinkage estimator seems to be more computationally intensive than (\ref{optshrin}) for $\bSigma_0=\frac{1}{p}\bI$. The reason is that the quantity $b^2$ is computed by a loop while $\dfrac{1}{n}||\bS_n||^2_{tr}$ needs only the computation of the trace.

\begin{figure}[H]
\includegraphics[scale=0.36]{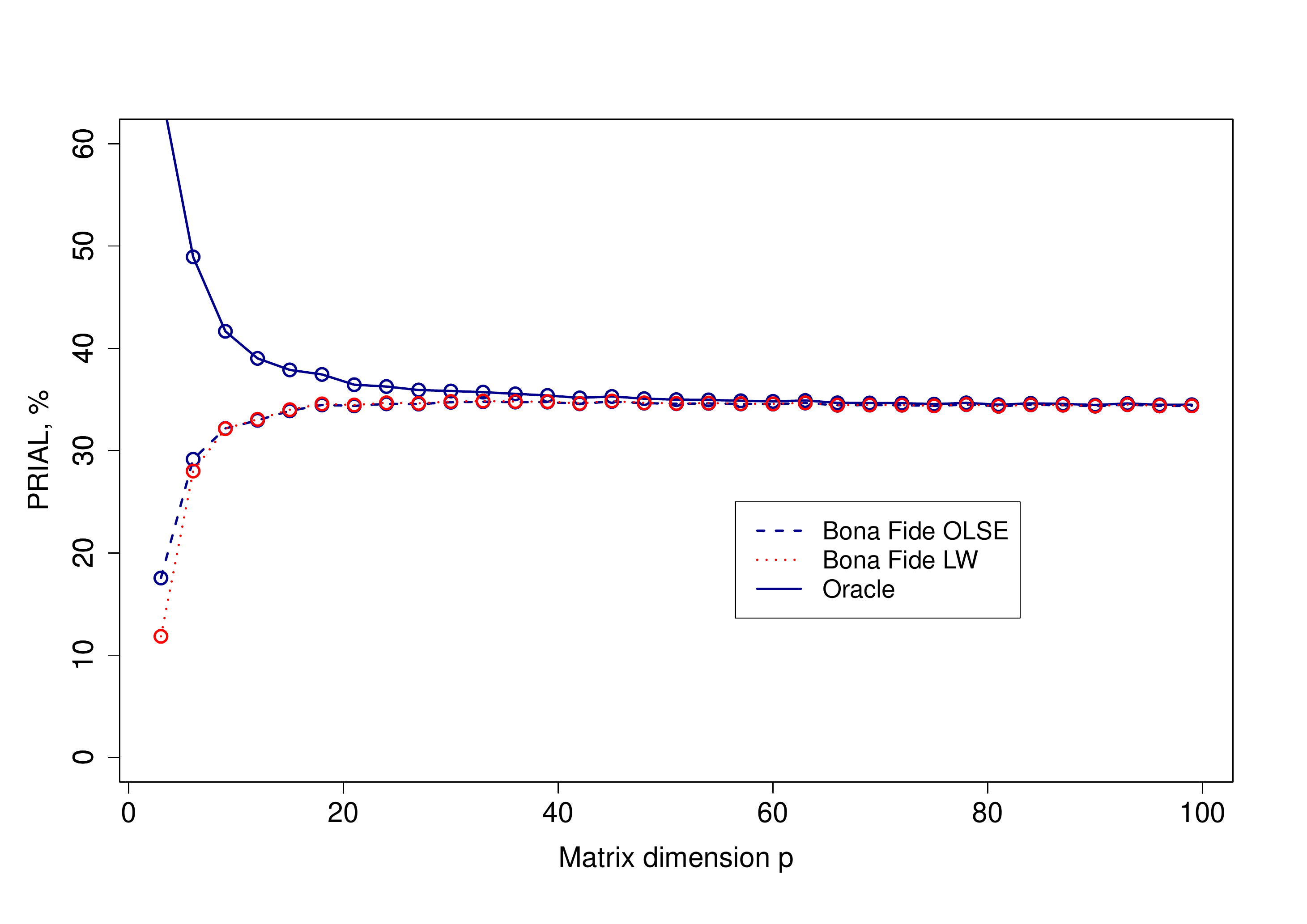}
\caption{PRIALs for the oracle, the bona fide OLSE, and the bona fide LW estimators for $p=3k,k\in\{1,\ldots,33\}$, $c=1/3$, and $\bSigma_0=\frac{1}{p}\bI$, 1000 repetitions.}
\label{Fig:1}
\end{figure}

Ledoit and Wolf (2004) proved that their linear shrinkage estimator tends on average to the oracle one while our estimator tends to the oracle almost surely and, consequently, it is expected that they are asymptotically the same. Moreover, the choice $\bSigma_0=\dfrac{1}{p}\bI$ can be too conservative and better results can be obtained for other values of $\bSigma_0$.

 In Figure 1 we present the simulation results for normally distributed data in the case when $\bSigma_0=\dfrac{1}{p}\bI$. We take, without loss of generality, $\bSigma_n$ as a diagonal matrix and separate its spectrum in three equal parts with eigenvalues $0.1$, $5$ and $10$. In terms of the corresponding cumulative distribution function of the eigenvalues of $\bSigma_n$ (ct. Section 2) it holds that
 \begin{equation}\label{sigmaH}
 H^{\bSigma_n}_n(t)=1/3\delta_{[0.1,\hspace{1mm}\infty)}(t)+1/3\delta_{[5,\hspace{1mm}\infty)}(t)+1/3\delta_{[10,\hspace{1mm}\infty)}(t)\,,
 \end{equation}
 where $\delta$ is the Dirac delta function. Doing so we leave the structure of population covariance matrix unchanged for all dimensions $p$. Then, we compare the LW linear shrinkage estimator with the suggested OLSE estimator in terms of their PRIAL's. For an arbitrary estimator of the covariance matrix, $\widehat{\mathbf{M}}$, the PRIAL (Percentage Relative Improvement in Average Loss) is defined as
 \begin{equation}\label{PRIAL}
 \text{PRIAL}(\widehat{\mathbf{M}})=\left(1-\dfrac{E||\widehat{\mathbf{M}}-\bSigma_n||^2_F}{E||\bS_n-\bSigma_n||_F^2}\right)\cdot100\%\,.
 \end{equation}
 By definition (\ref{PRIAL}), PRIAL($\bS_n$) is equal to zero and PRIAL($\bSigma_n$) is equal to 100$\%$.

Figure 1 clearly shows that both the LW estimator and the suggested OLSE estimator converge quickly to their common oracle in average. Moreover we conclude that there is no significant difference between the two estimators.

 In Figure 2 we show how the prior knowledge of the structure of the population covariance matrix can improve the OLSE estimator when $\bSigma_0=1/p\bI$. We assume that the prior matrix $\bSigma_0$ conforms to the spectrum separation of the covariance matrix $\bSigma_n$. Thus, suppose we know that the spectrum of population covariance matrix is separated in three equal blocks (see equality (\ref{sigmaH})) and we do not take any other information into account. The diagonal elements of prior matrix $\bSigma_0$ are chosen to be $1$, $2$ and $3$. In terms of the cumulative distribution function of $\bSigma_0$ it holds that
 \begin{equation}\label{sigmaH0}
 H^{\bSigma_0}(t)=1/3\delta_{[1,\hspace{1mm}\infty)}(t)+1/3\delta_{[2,\hspace{1mm}\infty)}(t)+1/3\delta_{[3,\hspace{1mm}\infty)}(t)\,.
 \end{equation}

\begin{figure}[H]
\includegraphics[scale=0.36]{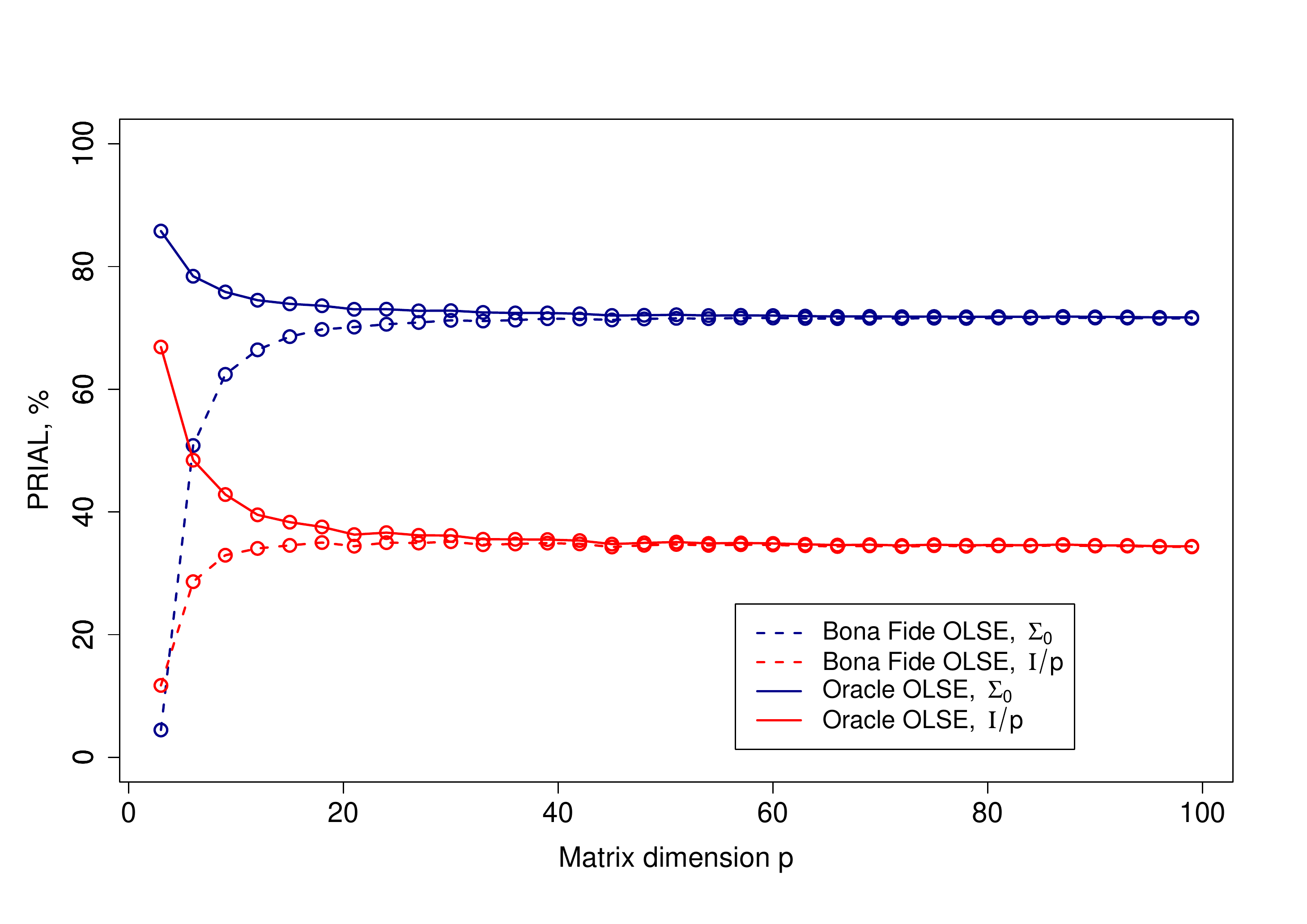}
\caption{PRIALs for the oracle and the bona fide OLSE estimators with $\bSigma_0=1/p\bI$ and $\bSigma_0$ as given in (\ref{sigmaH0}) for $p=3k,k\in\{1,\ldots,33\}$ and $c=1/3$, 1000 repetitions.}
\label{Fig:2}
\end{figure}
	
\begin{figure}[H]
\includegraphics[scale=0.36]{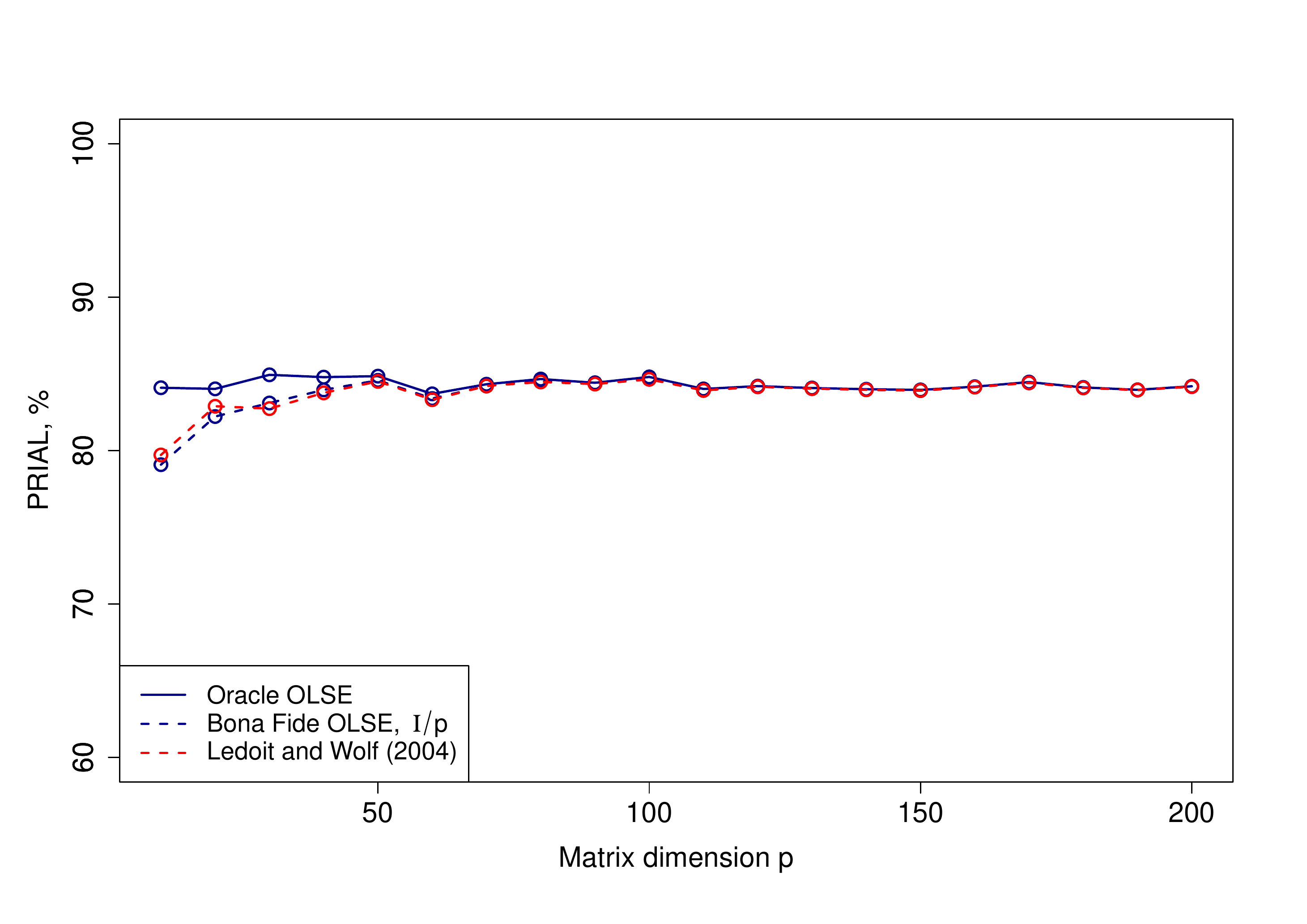}
\caption{PRIALs for the oracle, the bona fide OLSE, and the bona fide LW estimators for $p=10k,k\in\{1,\ldots,20\}$, $c=2$, and $\bSigma_0=\frac{1}{p}\bI$, 1000 repetitions.}
\label{Fig:3}
\end{figure}

In this case Figure 2 shows that improvement of the OLSE estimator is about $40\%$ over the OLSE estimator constructed with $\bSigma_0=1/p\bI$. It is remarkable that both the oracle and bona fide OLSE estimators dominate the corresponding oracle and bona fide estimators calculated for $\bSigma_0=1/p\bI$. Note that both the estimators now possess different oracles due to the prior $\bSigma_0$.

\begin{figure}[H]
\includegraphics[scale=0.36]{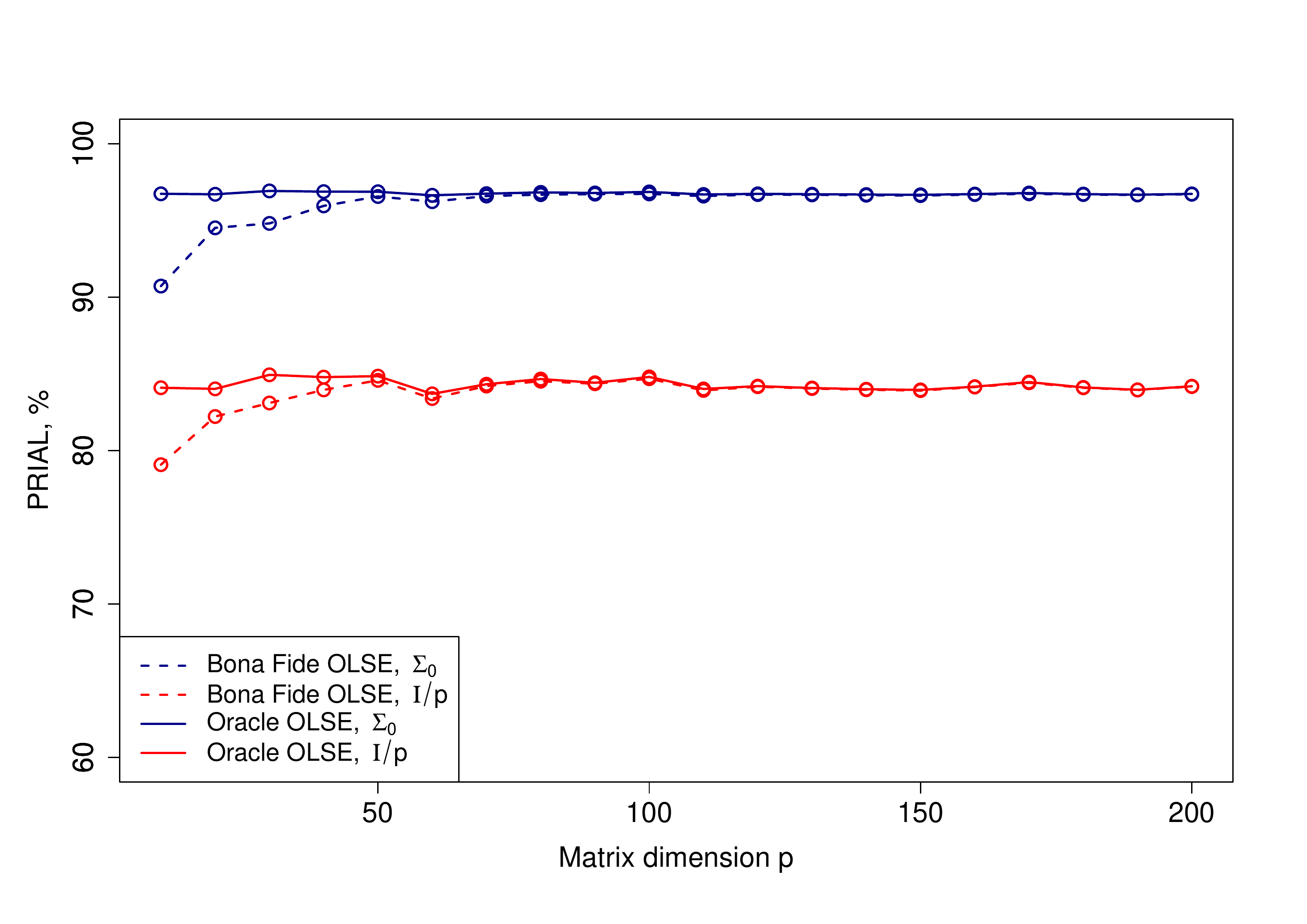}
\caption{PRIALs for the oracle and the bona fide OLSE estimators with $\bSigma_0=1/p\bI$ and $\bSigma_0$ as given in (\ref{sigmaH0}) for $p=10k,k\in\{1,\ldots,20\}$ and $c=2$, 1000 repetitions.}
\label{Fig:4}
\end{figure}

Similar situation can be observed in the case $c=2$. Here the proportion of zero eigenvalues is asymptotically equal to $1-c^{-1}=0.5$, i.e. $50\%$ (see, e.g. Bai and Silverstein (2010)). In Figures 3 and 4 we present the corresponding results of the Monte Carlo simulations. Figure 3 shows that the derived estimator for the population covariance matrix with the prior $\bSigma_0=1/p\bI$ is on average asymptotically the same as the linear shrinkage of Ledoit and Wolf (2004). It is noted that the overall performance of the OLSE estimator is significantly better than in the case $c=1/3$.

Figure 4 presents the case when the additional information about the spectrum separation from (\ref{sigmaH0}) is taken into account. Here, the spectrum of $\bSigma_0$ is determined by \[H^{\bSigma_0}(t)=1/3\delta_{[1,\hspace{1mm}\infty)}(t)+1/3\delta_{[2,\hspace{1mm}\infty)}(t)+1/3\delta_{[60,\hspace{1mm}\infty)}(t).\]
The dominance of about $20\%$ is detected. It means that the information about the spectrum separation of the population covariance matrix plays a crusial role for choosing the prior target matrix for the OLSE estimator. Consequently, when the target matrix $\bSigma_0$ departs away from the identity matrix, it has a positive influence on the OLSE estimator and can improve it significantly.

 Finally, we note that the optimal linear shrinkage estimator (\ref{optshrin}) seems to be a good generalization and modification of the linear shrinkage estimator presented by Ledoit and Wolf (2004), which is also a good alternative to the sample covariance matrix. This continues to be true even when the dimension $p$ is greater than the sample size $n$.

\section{Empirical Study}

In this section, we apply the derived estimator for the covariance matrix to real data which consist of daily asset returns on the $431$ assets listed in the S\&P 500 (Standard \& Poor's $500$) index and traded during the whole period from 13.01.2004 to 10.01.2014. The S\&P 500 is based on the market capitalizations of 500 large companies having common stock listed on the NYSE or NASDAQ. The number of the considered assets reflects the common setting in high-dimensional portfolio problems.

Next we analyse the influence of the portfolio size and the sample size on the behavior of the estimators for the Frobenius norm and the maximum and minimum eigenvalues of the population covariance matrix which are based on the derived OLSE and the sample covariance matrix. In Figure \ref{Fig:5} we present the results in case of $p=156$ and $n \in \{104, 130, 195, 312\}$ which leads to $c \in \{0.5, 0.8, 1.2, 1.5\}$, respectively. Since the choice of the assets is not unique, here we sample randomly $p=156$ assets out of $431$ and generate $10^3$ different portfolios. In Figures \ref{Fig:6} and \ref{Fig:7}, the results are shown for several values of $p \in \{50, 100, 200, 300\}$. The sample size $n$ is chosen such that $c \in (0,3)$. For each point from the figure the maximum (minimum) eigenvalue is calculated based on a randomly chosen portfolio of the dimension $p$.

\begin{figure}[H]
\includegraphics[scale=0.5]{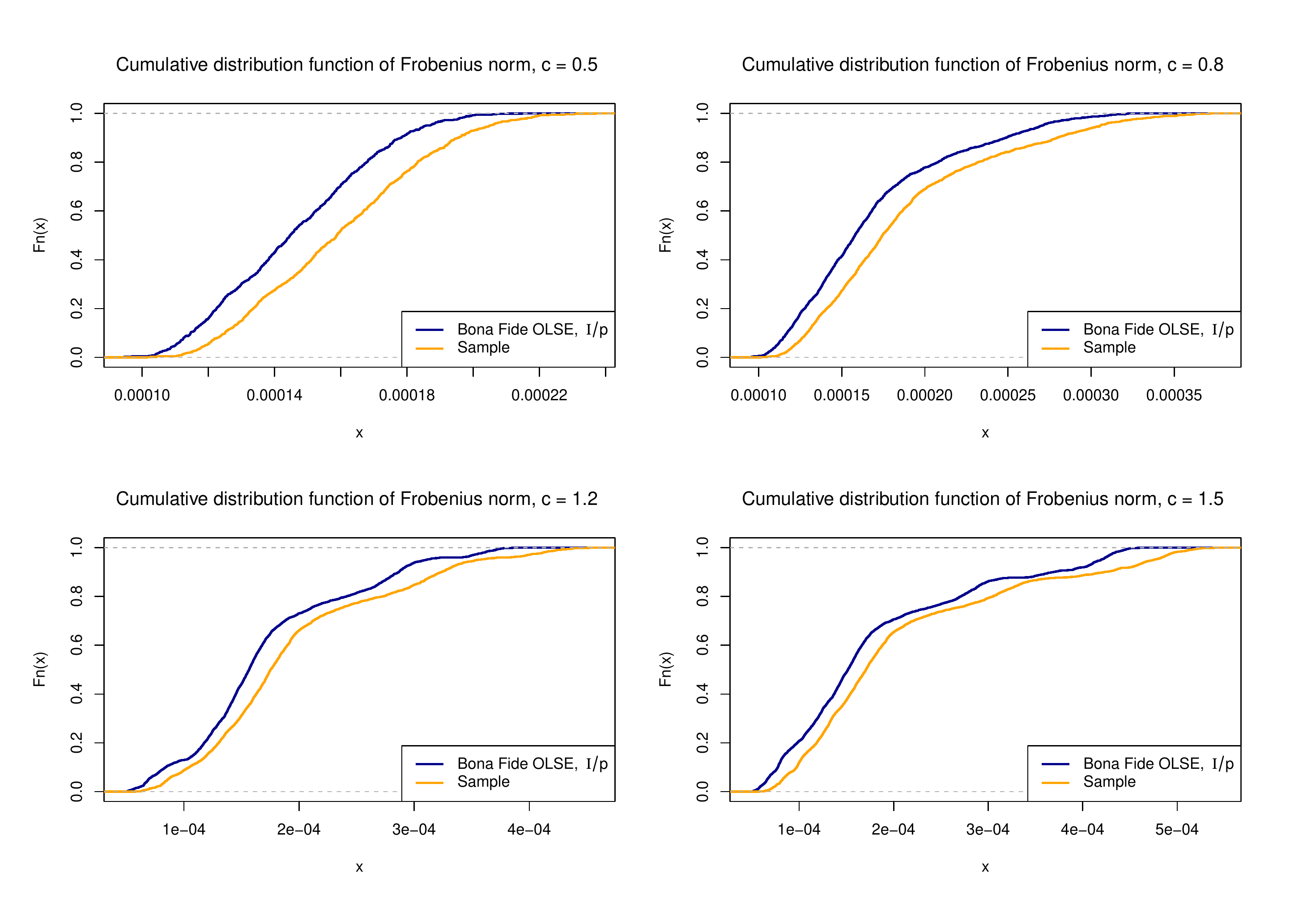}
\caption{Empirical distribution function of the Frobenius norms of the OLSE and the sample covariance matrix.}
\label{Fig:5}
\end{figure}

Figure \ref{Fig:5} provides the empirical distribution of the Frobenius norms of the OLSE and the sample covariance matrix in case of $10^3$ randomly chosen portfolios of the dimension $p=156$. Here, we observe that the Frobenius norm of the OLSE is uniformly smaller than the one of the sample covariance matrix. This result holds true for all of the considered values of $c \in \{0.5, 0.8, 1.2, 1.5\}$. It is in line with the results of Theorem 3.2 where it is proved that the Frobenius norm of the sample covariance matrix overestimates asymptotically the corresponding population value.

In Figures \ref{Fig:6} and \ref{Fig:7}, we analyze the behavior of the smallest and the largest eigenvalues of the OLSE and the sample estimator of the covariance matrix. The results in case of the smallest eigenvalue are of great importance in Finance since the smallest eigenvalue of the covariance matrix is directly related to the variance of the global minimum variance portfolio which is very popular in Portfolio Theory (cf. Frahm and Memmel (2010)). We observe that the OLSE makes the largest eigenvalue smaller, whereas the smallest eigenvalue becomes larger in comparison to the sample covariance matrix. This result allows us to correct the investor's overoptimism concerning the risk when the global minimum variance is constructed by using the sample covariance matrix.

\begin{figure}[H]
\includegraphics[scale=0.5]{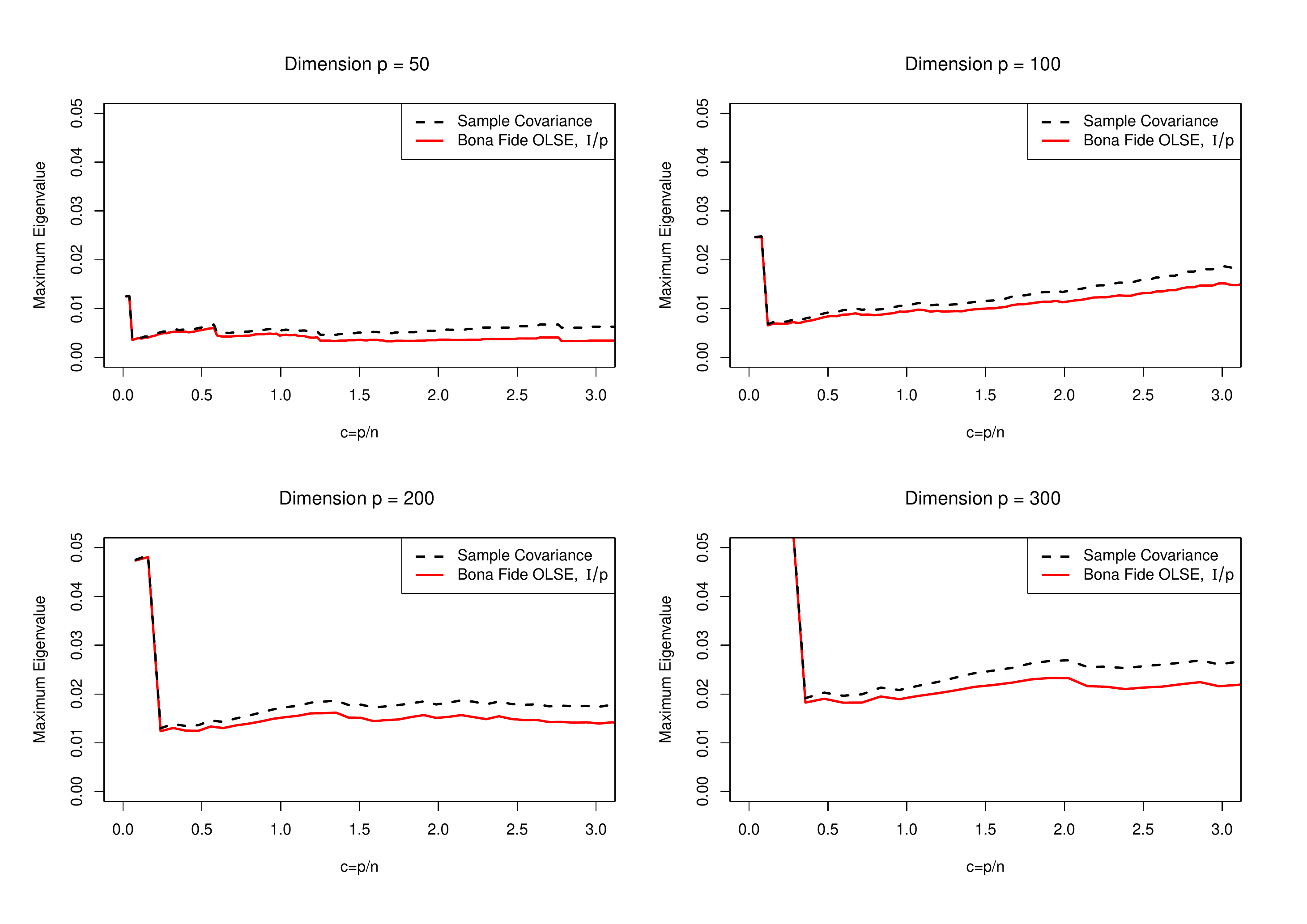}
\caption{Maximum eigenvalue of the OLSE and the sample covariance matrix.}
\label{Fig:6}
\end{figure}

\begin{figure}[H]
\includegraphics[scale=0.5]{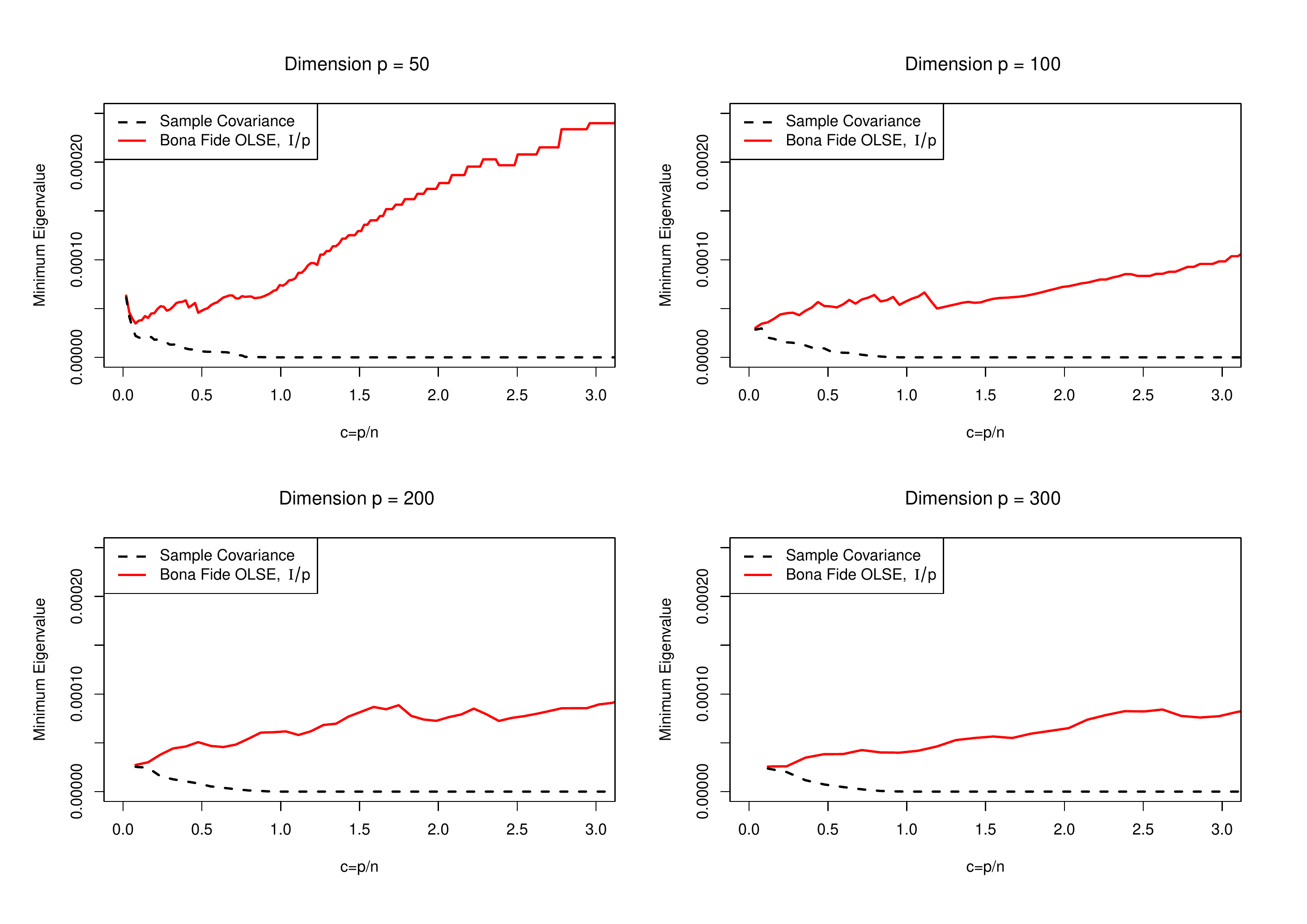}
\caption{Minimum eigenvalue of the OLSE and the sample covariance matrix.}
\label{Fig:7}
\end{figure}

\section{Summary} This paper considers the problem of estimation of the covariance matrix in large dimensions.  This case is known in literature as the large dimensional asymptotics and it includes the number of variables $p\rightarrow\infty$ and the sample size $n\rightarrow\infty$ so that $p/n\rightarrow c\in (0, +\infty)$. Here, we construct the optimal linear shrinkage estimator (OLSE) for the covariance matrix which is proven to have \textit{almost surely} the smallest Frobenius loss asymptotically. It is compared with the linear shrinkage estimator constructed by Ledoit and Wolf (2004). A significant improvement is obtained when some additional \textit{prior} information on the population covariance matrix is available.

\section{Appendix} Here the proofs of the theorems are given.\\

\textbf{Proof of Theorem 3.1}.
\begin{proof}
In order to prove Theorem 3.1 we use the MP equation (\ref{MP}). Before we proceed, we rewrite $\dfrac{1}{p}\text{tr}(\bS_n^2)$ for all $z\in\mathbbm{C}^+$ in the following way
\begin{equation}\label{rew2}
 \dfrac{1}{p}\text{tr}(\bS_n^2)=-\left.\dfrac{1}{2}\dfrac{\partial^2}{\partial z^2}\dfrac{1}{p}\dfrac{\text{tr}(\bS_n-1/z\bI)^{-1}}{z}\right|_{z=0}=-\left.\dfrac{1}{2}\dfrac{\partial^2}{\partial z^2}\left(\dfrac{m_{F_n}(1/z)}{z}\right)\right|_{z=0}\,,
\end{equation}
where $m_{F_n}(1/z)$ is the Stieltjes transform introduced in (\ref{Strans}). From Theorem 2.1 we know that $m_{F_n}(1/z)$ tends almost surely to a nonrandom $m_{F}(1/z)$ which satisfies the MP equation (\ref{MP}).  Using this fact and denoting $\Theta_n(z)=\dfrac{m_{F_n}(1/z)}{z}$ we get that $\Theta_n(z)$ tends almost surely to $\Theta(z)=\dfrac{m_{F}(1/z)}{z}$ which is the unique solution of the following equation
\begin{eqnarray}\label{tetaMP}
 \Theta(z)&=&\int\limits_{-\infty}^{+\infty}\dfrac{1}{z\tau((1-c)-c\Theta(z))-1}dH(\tau)\,.
\end{eqnarray}
 Before we proceed with the second derivative of $\Theta(z)$ with respect to $z$, it is shown that the quantities $\Theta(z)$, $\Theta^\prime(z)$ and $\Theta''(z)$, which appear in the calculation, are bounded at zero.

First, we point out that the limit under the integral sign in (\ref{tetaMP}) can be safely moved by applying the dominated convergence theorem together with the fact that $H$ is a probability measure and the boundedness of $\Theta(z)$. More precisely, let $\underline{m}(z)=-z(1-c)+cm_F(1/z)$ and rewrite (\ref{tetaMP}) in the following way
\begin{equation}\label{tetaMP2}
 \Theta(z)=-\int\limits_{-\infty}^{+\infty}\dfrac{1}{\tau \underline{m}(z)+1}dH(\tau)\,,
\end{equation}
where the function $\underline{m}(z)$ is another Stieltjes transformation of a positive measure on $\mathbbm{R}^+$ (see, Silverstein (1995)). Thus, using inequality (see, Rubio and Mestre (2011, Lemma 6))
\begin{equation}\label{rb}
\left|\dfrac{\tau}{\tau\underline{m}(z)+1}\right|\leq \tau\dfrac{|z|}{\textbf{Im}(z)}\,
\end{equation}
we get
\begin{equation}\label{posmeas}
 \left|\dfrac{1}{\tau \underline{m}(z)+1}\right|\leq\dfrac{|z|}{\textbf{Im}(z)}\,.
\end{equation}
Now, without loss of generality we construct the complex sequence $z_k=ih_k$ such that $h_k\rightarrow0^+$ as $k\rightarrow\infty$. Note that for our framework the real part of $z$ is not essential that is why without loss of generality we put it equal to zero. It follows that
\begin{equation}\label{posmeas2}
 \left|\dfrac{1}{\tau \underline{m}(z_k)+1}\right|\leq\dfrac{|z_k|}{\textbf{Im}(z_k)}=\dfrac{h_k}{h_k}=1\,.
\end{equation}
 So, using the fact that the function under the integral in (\ref{tetaMP2}) is bounded by an integrable function under the probability measure $H$ and the dominated convergence theorem we can move the limit $z\rightarrow0^+$ under the integral sign in (\ref{tetaMP}).
It leads to
\begin{equation}\label{t0}
 \Theta(0)\equiv\lim\limits_{z\rightarrow0^+}\Theta(z)=-1,~~\text{if}~\lim\limits_{z\rightarrow0^+}z\Theta(z)<\infty\,.
\end{equation}
The last inequality is true since if $\lim\limits_{z\rightarrow0^+}z\Theta(z)=\infty$ then from (\ref{tetaMP}) we get that $\lim\limits_{z\rightarrow0^+}\Theta(z)=0$. This leads to $\lim\limits_{z\rightarrow0^+}z\Theta(z)=0$ which contradicts the statement $\lim\limits_{z\rightarrow0^+}z\Theta(z)=\infty$.

A similar analysis is performed for $\Theta^\prime(z)$ given by
\begin{equation}\label{tder}
 \Theta^\prime(z)=-\int\limits_{-\infty}^{+\infty}\dfrac{\tau((1-c)-c\Theta(z)-cz\Theta^\prime(z))}{(z\tau((1-c)-c\Theta(z))-1)^2}dH(\tau)\,.
\end{equation}
Rearranging terms in (\ref{tder}) we get
\begin{eqnarray}\label{tder0}
&\Theta^\prime(z)&\left(1-cz\int\limits_{-\infty}^{+\infty}\dfrac{\tau}{(z\tau((1-c)-c\Theta(z))-1)^2} dH(\tau)\right)\\
&=&\biggl(-(1-c)+c\Theta(z)\biggr)\int\limits_{-\infty}^{+\infty}\dfrac{\tau}{(z\tau((1-c)-c\Theta(z))-1)^2} dH(\tau)\nonumber\,.
\end{eqnarray}
Due to $\Theta(0)<\infty$, we observe that the right hand side of (\ref{tder0}) is bounded at zero and thus the left hand side tends to $\Theta^\prime(0)$ as $z\rightarrow0^+$. It leads to
 \begin{equation}\label{t1}
   \Theta^\prime(0)\equiv\lim\limits_{z\rightarrow0^+}\Theta^\prime(z)=-\int\limits_{-\infty}^{+\infty}\tau dH(\tau)\,.
 \end{equation}
 Next, we derive $\Theta''(z)$. Let
\begin{align}\label{M}
&&M(z,\tau)=-\dfrac{1}{(z\tau((1-c)-c\Theta(z))-1)^2}\\
&&N(z,\tau)=\tau((1-c)-c\Theta(z)-cz\Theta^\prime(z))\label{N}\,.
 \end{align}
 Using (\ref{tder}) together with (\ref{M}) and (\ref{N}), $\Theta''(z)$ can be rewritten in the following way
 \begin{equation}\label{tder2}
 \Theta''(z)=\int\limits_{-\infty}^{+\infty}N^\prime(z,\tau)M(z,\tau)dH(\tau)+\int\limits_{-\infty}^{+\infty}M^\prime(z,\tau)N(z,\tau)dH(\tau)\,,
\end{equation}
where
 \begin{eqnarray}\label{M1}
&&M^\prime(z,\tau)=-2M(z,\tau)\dfrac{N(z,\tau)}{z\tau((1-c)-c\Theta(z))-1)}\\
&&N^\prime(z,\tau)=\tau\bigl(-2c\Theta^\prime(z)-cz\Theta''(z)\bigr)\label{N1}\,.
 \end{eqnarray}
 From (\ref{tder2}) we get
\begin{eqnarray}\label{tder22}
&\Theta''(z)&\left(1-cz\int\limits_{-\infty}^{+\infty}\dfrac{\tau}{(z\tau((1-c)-c\Theta(z))-1)^2} dH(\tau)\right)\\
&=&2c\Theta^\prime(z)\int\limits_{-\infty}^{+\infty}\dfrac{\tau}{(z\tau((1-c)-c\Theta(z))-1)^2} dH(\tau)\nonumber\\
&+&\int\limits_{-\infty}^{+\infty}M^\prime(z,\tau)N(z,\tau)dH(\tau)\nonumber\,.
\end{eqnarray}

 Taking the limit as $z\rightarrow0^+$ in (\ref{M}), (\ref{N}) and (\ref{M1}) as well as using (\ref{t1}) and (\ref{t0}) we obtain
  \begin{eqnarray}
  && \lim\limits_{z\rightarrow0^+}M(z,\tau)=-1,\label{M21}\\
  && \lim\limits_{z\rightarrow0^+}N(z,\tau)=\tau,\label{N21}\\
&& \lim\limits_{z\rightarrow0^+}M^\prime(z,\tau)=-2\tau\label{M2}\,.
 \end{eqnarray}
Hence, from (\ref{tder22}) together with (\ref{M21}), (\ref{N21}), (\ref{M2}), (\ref{t1}) and (\ref{t0}) we get that $\Theta''(0)<\infty$ and
 \begin{equation}\label{tder21}
\Theta''(0)\equiv \lim\limits_{z\rightarrow0^+}\Theta''(z)=-2c\left(\int\limits_{-\infty}^{+\infty}\tau dH(\tau)\right)^2-2\int\limits_{-\infty}^{+\infty}\tau^2 dH(\tau)\,.
\end{equation}
Now the result of Theorem 3.1 follows from (\ref{rew2}), (\ref{tetaMP}) and (\ref{tder21}).\\
\end{proof}

\textbf{Proof of Theorem 3.2}.
\begin{proof}
In order to prove the statement of the Theorem 3.2 we use the following lemma of Rubio and Mestre (2011).
\begin{lemma}\textbf{[Lemma 4, Rubio and Mestre (2011)]}
Let $\{\bol{\xi}_1,\ldots,\bol{\xi}_n\}$ be a sequence of i.i.d. real random vectors with zero mean vector, identity covariance matrix, and uniformly bounded $4+\varepsilon$ moments for some $\varepsilon>0$ and let $\bC_n$ be some nonrandom matrix (possibly random but independent of $\bol{\xi}_n$) with bounded trace norm at infinity. Then
\begin{equation}\label{lemma}
\left|\dfrac{1}{n}\sum\limits_{i=1}^n\bol{\xi}_i^\prime\bC_n\bol{\xi}_i-\text{tr}(\bC_n) \right|\longrightarrow0~\text{a. s.}~~\text{as}~n\rightarrow\infty\,.
\end{equation}
\end{lemma}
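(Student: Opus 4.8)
The plan is to exploit the fact that, for each fixed $n$, the summands in (\ref{lemma}) are independent and identically distributed in $i$, so that
\[
Z_n:=\frac1n\sum_{i=1}^n\bol{\xi}_i^\prime\bC_n\bol{\xi}_i-\tr(\bC_n)
\]
is a normalised sum of i.i.d.\ centred random variables whose common law, however, changes with $n$ (a triangular array). First I would reduce to symmetric $\bC_n$, since $\bol{\xi}_i^\prime\bC_n\bol{\xi}_i=\bol{\xi}_i^\prime\tfrac12(\bC_n+\bC_n^\prime)\bol{\xi}_i$, and, when $\bC_n$ is random but independent of the $\bol{\xi}_i$, condition on $\bC_n$ and treat it as nonrandom with $||\bC_n||_{tr}\le M$; since the bounds below are uniform over this constraint, the a.s.\ statement passes from the conditional to the unconditional level. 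Because $\E[\bol{\xi}_i\bol{\xi}_i^\prime]=\bI$, each summand satisfies $\E[\bol{\xi}_i^\prime\bC_n\bol{\xi}_i]=\tr(\bC_n)$, so it is already centred about the target and it remains to show $Z_n\to0$ a.s.

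The decisive observation is that the centred quadratic form $Y_{ni}:=\bol{\xi}_i^\prime\bC_n\bol{\xi}_i-\tr(\bC_n)$ possesses a moment of order $\theta:=2+\varepsilon/2>2$ that is bounded uniformly in $n$. Indeed, a moment inequality for quadratic forms (Bai and Silverstein (2010), Lemma B.26) gives
\[
\E\,|Y_{ni}|^{\theta}\le C_\theta\,||\bC_n||_F^{\theta}\bigl(\nu_4^{\theta/2}+\nu_{2\theta}\bigr),
\]
where $\nu_\ell$ bounds the $\ell$-th absolute moment of an entry of $\bol{\xi}_i$. Here $2\theta=4+\varepsilon$, so $\nu_{2\theta}$ is exactly the moment supplied by the hypothesis, $\nu_4<\infty$ follows by Lyapunov's inequality, and bounded trace norm controls the Frobenius norm through $||\bC_n||_F\le||\bC_n||_{tr}\le M$ (the sum of squared singular values never exceeds the square of their sum). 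Consequently $\E|Y_{ni}|^\theta\le K$ for a constant $K$ independent of $n$; the same computation with $\theta=2$ gives the uniform variance bound $\sigma_n^2:=\Var(Y_{ni})\le K$.

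With a uniform moment of order $\theta>2$ in hand, I would obtain a summable tail bound for the \emph{whole} sequence by Rosenthal's inequality applied to the i.i.d.\ (in $i$) variables $Y_{n1},\ldots,Y_{nn}$:
\[
\E\Bigl|\sum_{i=1}^n Y_{ni}\Bigr|^{\theta}\le C_\theta\Bigl[\bigl(n\sigma_n^2\bigr)^{\theta/2}+n\,\E|Y_{n1}|^{\theta}\Bigr]\le C_\theta K'\bigl(n^{\theta/2}+n\bigr).
\]
Markov's inequality then yields
\[
P\bigl(|Z_n|>\delta\bigr)\le\frac{\E|\sum_i Y_{ni}|^{\theta}}{(\delta n)^{\theta}}\le C_\theta K'\delta^{-\theta}\bigl(n^{-\theta/2}+n^{-(\theta-1)}\bigr).
\]
Since $\theta>2$ forces both $\theta/2>1$ and $\theta-1>1$, the series $\sum_n P(|Z_n|>\delta)$ converges for every $\delta>0$, and the Borel--Cantelli lemma gives $Z_n\to0$ almost surely, which is the assertion (\ref{lemma}).

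I expect the main obstacle to be the uniform moment bound of the second step, namely establishing that the centred quadratic form has a bounded $\theta$-th moment with $\theta$ strictly above $2$; this is precisely where the assumption of a $4+\varepsilon$ (rather than merely fourth) moment is consumed, since $2\theta=4+\varepsilon$. If one prefers not to cite the quadratic-form moment inequality, this bound can be produced by hand through a martingale (Burkholder--Rosenthal) decomposition of $\bol{\xi}_i^\prime\bC_n\bol{\xi}_i-\tr(\bC_n)$ into its diagonal part $\sum_j(\bC_n)_{jj}(\xi_{ij}^2-1)$ and its off-diagonal part $\sum_{j\ne k}(\bC_n)_{jk}\xi_{ij}\xi_{ik}$, the only delicate contribution being the diagonal term, whose $\theta$-th moment requires entry moments of order $2\theta=4+\varepsilon$ together with the bound $\sum_j(\bC_n)_{jj}^2\le||\bC_n||_F^2\le M^2$. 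Once this single estimate is in place, Rosenthal's inequality and Borel--Cantelli make the passage from convergence in probability to almost sure convergence routine, even though $\bC_n$ varies with $n$.
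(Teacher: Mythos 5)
Your proposal is correct, but there is nothing in the paper to compare it against: the paper never proves this statement, it simply imports it verbatim as Lemma 4 of Rubio and Mestre (2011) and uses it as a black box inside the proof of Theorem 3.2. Judged on its own merits, your argument is a sound, essentially standard proof of such trace lemmas, and the bookkeeping works out exactly as you claim: with $\theta=2+\varepsilon/2$ the quadratic-form inequality (Bai and Silverstein (2010), Lemma B.26) needs entry moments of order $2\theta=4+\varepsilon$, which is precisely the hypothesis; the bounded trace norm controls both relevant matrix functionals, since $\tr(\bC_n\bC_n^\prime)=||\bC_n||^2_F\leq||\bC_n||^2_{tr}\leq M^2$ and $\tr\bigl((\bC_n\bC_n^\prime)^{\theta/2}\bigr)\leq||\bC_n||_{tr}^{\theta}\leq M^{\theta}$; Rosenthal's inequality applies to the $n$ i.i.d.\ centred summands for each fixed $n$; and Borel--Cantelli needs only summability of the probabilities, so it is immune to the triangular-array structure, with $\theta/2>1$ and $\theta-1>1$ delivering exactly that summability (this is indeed where the $4+\varepsilon$ hypothesis is consumed; with $\varepsilon=0$ one gets the non-summable bound $n^{-1}$). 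Two minor points. First, Lemma B.26 requires the \emph{entries within each vector} $\bol{\xi}_i$ to be independent, which does not follow formally from ``i.i.d.\ vectors with identity covariance''; this is the intended reading of the lemma (and holds in the paper's application, where the $\bol{\xi}_i$ are columns of the i.i.d.\ matrix $\bx_n$), but your proof, like the original, genuinely needs it and you should state it. Second, Rosenthal can be bypassed entirely: applying B.26 once to the stacked $np$-dimensional vector $(\bol{\xi}_1^\prime,\ldots,\bol{\xi}_n^\prime)^\prime$ and the block-diagonal matrix $\bI_n\otimes\bC_n$, for which $\tr\bigl((\bI_n\otimes\bC_n)(\bI_n\otimes\bC_n)^\prime\bigr)=n\,\tr(\bC_n\bC_n^\prime)$, yields the same $O(n^{\theta/2}+n)$ bound in one stroke.
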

Next, we proceed to the proof of Theorem 3.2 directly by considering the asymptotic behavior of the following two quantities
\begin{eqnarray}
&&\eta_1=\dfrac{1}{p}\text{tr}(\bS_n\bTheta)=\dfrac{1}{n}\sum\limits_{i=1}^n\mathbf{x}_i^\prime\left(\dfrac{1}{p}\bTheta^{1/2}\bSigma_n\bTheta^{1/2}\right)\mathbf{x}_i\label{eta1}\\
&&\eta_2=\dfrac{1}{p}||\bS_n||_F^2=\dfrac{1}{p}\text{tr}(\bS_n^2)\label{eta2}\,,
\end{eqnarray}
where $\mathbf{x}_i$ is the $i$th column of the matrix $\bx_n$ defined in (\ref{obs}).

First, we prove that for $\dfrac{p}{n}\rightarrow c\in (0, +\infty)$ as $n\rightarrow\infty$ the following assertion holds
\begin{equation}\label{eta_1}
\Biggl|\eta_1-\dfrac{1}{p}\text{tr}(\bSigma_n\bTheta)\Biggr|\longrightarrow 0~\text{a. s.}~~\text{for}~~n\rightarrow\infty\,.
\end{equation}
For the application of Lemma 7.1 we have to show that the trace norm of the matrix $\dfrac{1}{p}\bTheta^{1/2}\bSigma_n\bTheta^{1/2}$ is bounded. It holds that (see, Fang et al. (1994))
{\small
\begin{equation}\label{Rnorm2}
\left|\left|\dfrac{1}{p}\bTheta^{1/2}\bSigma_n\bTheta^{1/2}\right|\right|_{tr}=\dfrac{1}{p}\text{tr}(\bTheta^{1/2}\bSigma_n\bTheta^{1/2})=\dfrac{1}{p}\text{tr}(\bTheta\bSigma_n) \leq\dfrac{\text{tr}(\bTheta)\tau_{max}(\bSigma_n)}{p}\,,
\end{equation}
}
where $\tau_{max}(\bSigma_n)$ denotes the largest eigenvalue of the matrix $\bSigma_n$. At last, using the assumption (A5) we deduce the boundedness of the trace norm of the matrix $\dfrac{1}{p}\bTheta^{1/2}\bSigma_n\bTheta^{1/2}$.
Thus, using Lemma 7.1 we get the statement (\ref{eta_1}).

Next, we prove the main result of Theorem 3.2, namely the following statement
\begin{equation}\label{eta_2}
\Biggl|\eta_2- \dfrac{1}{p}\biggl(||\bSigma_n||^2_F+\dfrac{c}{p}||\bSigma_n||^2_{tr}\biggr)\Biggr|\longrightarrow0~\text{a. s.}~\text{for}~\dfrac{p}{n}\rightarrow c\in(0, +\infty)~\text{as}~n\rightarrow\infty\,.
\end{equation}
In order to prove (\ref{eta_2}) we use the result of Theorem 3.1. First, we rewrite the difference in (\ref{eta_2}) via the triangle inequality in the following way
\begin{equation}\label{1step}
\Biggl|\eta_2-\dfrac{1}{p}\biggl(||\bSigma_n||^2_F+\dfrac{c}{p}||\bSigma_n||^2_{tr}\biggr)\Biggr|\leq\Biggl|\eta_2-\phi\Biggr|+\Biggl|\phi-\dfrac{1}{p}\biggl(||\bSigma_n||^2_F+\dfrac{c}{p}||\bSigma_n||^2_{tr}\biggr)\Biggr|\,,
\end{equation}
where $\phi=\int\limits_{-\infty}^{+\infty}\tau^2dH(\tau)+c\left(\int\limits_{-\infty}^{+\infty}\tau dH(\tau)\right)^2$ is given in Theorem 3.1. Next we show that the right hand side of (\ref{1step}) vanishes almost surely as $n\rightarrow\infty$. Using Theorem 3.1 we get immediately that for the first term in (\ref{1step}) holds
\begin{equation}\label{1term}
\Biggl|\eta_2-\phi\Biggr|\longrightarrow0~\text{a. s.}~\text{for}~n\rightarrow\infty
\end{equation}
The second term in (\ref{1step}) is nonrandom. Next, we show that it approaches to zero as $n\rightarrow\infty$.
Due to assumption (A3) it holds that $H_n(t)$ tends to $H(t)$ at all continuity points of $H(t)$. Thus,
\begin{equation}\label{tr1}
\dfrac{1}{p}||\bSigma_n||^2_F=\dfrac{1}{p}\text{tr}(\bSigma^2_n)=\dfrac{1}{p}\sum\limits_{i=1}^p\tau^2_i=\int\limits_{-\infty}^{+\infty}\tau^2 dH_n(\tau)\stackrel{{n\rightarrow\infty}}{\longrightarrow}\int\limits_{-\infty}^{+\infty}\tau^2 dH(\tau)\,.
\end{equation}
The integral in (\ref{tr1}) exists due to assumption (A5).
Similarly, it can be shown that
\begin{equation}\label{tr2}
\dfrac{1}{p^2}||\bSigma_n||^2_{tr}\stackrel{{n\rightarrow\infty}}{\longrightarrow}\left(\int\limits_{-\infty}^{+\infty}\tau dH(\tau)\right)^2\,.
\end{equation}
From (\ref{tr1}) and (\ref{tr2}) it follows that
\begin{equation}\label{2term}
\Biggl|\phi-\dfrac{1}{p}\biggl(||\bSigma_n||^2_F+\dfrac{c}{p}||\bSigma_n||^2_{tr}\biggr)\Biggr|\longrightarrow0~~\text{for}~n\rightarrow\infty\,.
\end{equation}
As a result, (\ref{1term}) and (\ref{2term}) complete the proof of Theorem 3.2.\\
\end{proof}

\section*{Acknowledgements}

\noindent The authors are grateful to the referee and the editor for their suggestions, which have improved the presentation in the paper. The first author is partly supported by the German Science Foundation (DFG) via the Research Unit 1735 ''Structural Inference in Statistics: Adaptation and Efficiency''. He also appreciates the financial support of the German Science Foundation (DFG) via the projects BO3521/2-2 and OK103/1-2 ''Wishart Processes in Statistics and Econometrics: Theory and Applications''.

\bibliographystyle{elsarticle-num}

\end{document}